\newcommand{\mbb}[1]{\mathbb #1}
\newcommand{\ms}[1]{\mathscr #1}
\newcommand{\wh}{\widehat}
\newcommand{\Spec}{\operatorname{Spec}}
\newcommand{\GL}{\operatorname{GL}}
\newcommand{\PGL}{\operatorname{PGL}}
\newcommand{\Hom}{\operatorname{Hom}}
\newcommand{\cd}{\operatorname{cd}}
\newcommand{\red}{{\operatorname{red}}}
\newcommand{\Pic}{{\operatorname{Pic}}}
\newcommand{\Cl}{{\operatorname{Cl}}}
\newcommand{\Frac}{{\operatorname{frac}}}
\newcommand{\til}[1]{\widetilde{#1}}
\newcommand{\customdiagram}[4]{
\fbox{\xymatrix @C=.33cm @R=-0.3cm {
 & {#2} \ar@/^0.4pc/[dr] \\
{#1} \ar@/^0.4pc/[ur] \ar@/_0.4pc/[dr] & & {#4} \\
 & {#3} \ar@/_0.4pc/[ur]
}}
}
\newcommand{\vardiagram}[2]{
\fbox{\xymatrix @C=.33cm @R=-0.3cm {
 & {#2}_1 \ar@/^0.4pc/[dr] \\
{#2}_0 \ar@/^0.4pc/[ur] \ar@/_0.4pc/[dr] & & {#1} \\
 & {#2}_2 \ar@/_0.4pc/[ur]
}}
}
\newcommand{\mvardiagram}[2]{
\mbox{\xymatrix @C=.33cm @R=-0.3cm {
 & {#2}_1 \ar@/^0.4pc/[dr] \\
{#2}_0 \ar@/^0.4pc/[ur] \ar@/_0.4pc/[dr] & & {#1} \\
 & {#2}_2 \ar@/_0.4pc/[ur]
}}
}
\newcommand{\varPdiagram}[2]{
\fbox{\xymatrix @C=.33cm @R=-0.3cm {
 & {#2}_\UU \ar@/^0.4pc/[dr] \\
{#2}_\BB \ar@/^0.4pc/[ur] \ar@/_0.4pc/[dr] & & {#1} \\
 & {#2}_\PP \ar@/_0.4pc/[ur]
}}
}
\newcommand{\antidiagram}[1]{
\fbox{\xymatrix @C=.33cm @R=-0.3cm {
 & {#1}_1 \ar@/_0.4pc/[dl] \\
{#1}_0 & & {#1} \ar@/_0.4pc/[ul] \ar@/^0.4pc/[dl] \\
 & {#1}_2 \ar@/^0.4pc/[ul]
}}
}
\theoremstyle{plain}
\newtheorem{thm}{Theorem}[section]
\newtheorem{lem}[thm]{Lemma}
\newtheorem{cor}[thm]{Corollary}
\newtheorem{prop}[thm]{Proposition}
\newtheorem*{thm*}{Theorem}
\newtheorem*{rem*}{Remark}
\newtheorem*{lem*}{Lemma}
\newtheorem*{cor*}{Corollary}
\newtheorem*{prop*}{Proposition}
\theoremstyle{definition}
\newtheorem{ex}[thm]{Example}
\theoremstyle{remark}
\newtheorem{rem}[thm]{Remark}
\newcommand{\SL}{\operatorname{SL}}
\newcommand{\sheaf}[1]{\mathscr{#1}}
\newcommand{\XX}{\sheaf{X}}
\newcommand{\PP}{\sheaf{P}}
\newcommand{\UU}{\sheaf{U}}
\newcommand{\BB}{\sheaf{B}}
\newcommand{\OO}{\sheaf{O}}
\newcommand{\KK}{\sheaf{K}}
\renewcommand{\P}{\mathbb P}
\newcommand{\QQ}{\mathbb{Q}}
\newcommand{\A}{\mathbb{A}}
\newcommand{\Z}{\mathbb{Z}}
\newcommand{\Gm}{\mathbb{G}_{\mathrm m}}
\newcommand{\fM}{\mathfrak M}
\newcommand{\h}{\mathrm h}
\newcommand{\oper}[1]{\operatorname{#1}}
\newcommand{\cha}{\oper{char}}
\renewcommand{\labelenumi}{(\theenumi)}
\def\<{\left<}
\def\>{\right>}
\newcommand{\Br}{\oper{Br}}
\newcommand{\Orth}{\oper{O}}
\renewcommand{\theenumi}{\alph{enumi}}
\renewcommand{\labelenumi}{(\alph{enumi})}
\DeclareSymbolFont{cyrletters}{OT2}{wncyr}{m}{n}
\DeclareMathSymbol{\Sha}{\mathalpha}{cyrletters}{"58}
\title{Adelic double cosets over semi-global fields}
\date{\today}
\author{ David Harbater}
\thanks{
\textit{Mathematics Subject Classification} (2010): 11E72, 12G05, 14G05 (primary);  14H25, 20G15, 14G27 (secondary).\\ 
\textit{Key words and phrases.} Linear algebraic groups, torsors, local-global principles, double cosets, class sets, Galois cohomology, semi-global fields, patching.\\ 
The author was supported in part by NSF grant DMS-2102987.}
\begin{document}

\maketitle

\begin{abstract}
Double coset spaces of adelic points on linear algebraic groups arise in the study of global fields; e.g., concerning local-global principles and torsors.  A different type of double coset space plays a role in the study of semi-global fields such as $p$-adic function fields.  This paper relates the two, by establishing adelic double coset spaces over semi-global fields; relating them to local-global principles and torsors; and providing explicit examples.
\end{abstract}

\section{Introduction}

Adelic double coset spaces play an important role in the study of linear algebraic groups $G$ over a global field $F$.  In work beginning with Borel (see \cite{Borel}), these spaces have been used in understanding local-global principles, class groups (using $G=\Gm$ or more generally $\GL_n$), and the genus of quadratic forms (using $G = \Orth(n)$), as well as other applications (see \cite[Section~1.2]{Con}, \cite{CRR}).   For any
finite set $S$ of places of $F$ including all the archimedean places (if any), the double coset space
$G(F) \backslash G(\A(F,S)) / G(\A^\infty(F,S))$
is an abelian group if $G$ is commutative, and in general is a pointed set.  Because of the relationship to the class group of a number field or the Picard group of a curve over a finite field, this space is also referred to as the ``class set'' or ``class group'' of the group $G$ (see \cite[Chapter~8]{PlaRap}, \cite[Section~2]{RR:CR}).

A different type of double coset space has arisen in the study of semi-global fields such as $\QQ_p(x)$; i.e., one-variable function fields $F$ over a complete discretely valued field $K$.  It involves only finitely many factors (unlike adelic double cosets), and provides the obstruction to a local-global principle for $G$-torsors over $F$ in the context of patching over fields (see \cite{HHK:H1}).  This patching obstruction approximates a more canonical local-global obstruction $\Sha_X(F,G)$, which is taken with respect to points on the closed fiber $X$ of a projective model $\XX$ of $F$ over the valuation ring $\OO_K$; and that in turn can be used in studying the local-global obstruction $\Sha(F,G)$ with respect to discrete valuations on $F$ (see \cite[Proposition~8.2]{HHK:H1}, and \cite[Theorem~3.2]{CHHKPS}).

This paper introduces an adelic double coset space in the context of semi-global fields that makes it possible to study $\Sha_X(F,G)$ more directly, rather than via approximation by patching obstructions.  It also provides a clearer link between the global and semi-global contexts.  
Via results that have already been proven about $\Sha_X(F,G)$ and its patching analog, this double coset space can be computed in a number of situations.  Its definition, however, is not quite what one might have expected, due to the fact that the intersection property for patching (asserting that $F$ is the intersection of the finitely many overfields in the double coset space; see \cite[Section~2]{HHK:H1}) turns out not to hold in the adelic situation.  But the modified adelic double coset space introduced below nevertheless agrees with $\Sha_X(F,G)$, as a consequence of Artin's Approximation Theorem \cite{artin}; see Corollary~\ref{double coset H1}.  

The above obstructions are contained in the Galois cohomology set $H^1(F,G)$; and if $G$ is commutative then this is a group and higher Galois cohomology groups $H^i(F,G)$ can also be considered.  In that situation, we define higher double coset spaces that can be directly computed using higher obstructions $\Sha_X^i(F,G) \subset H^i(F,G)$; see Corollary~\ref{double coset Hi}.  

Whereas these double coset spaces are defined in terms of fields, closer analogs to the global field case, involving local rings, are introduced afterwards, in the context of one-dimensional schemes (generalizing the global field situation) and for projective models of semi-global fields.  These analogs agree with the obstructions to local-global principles for torsors over such schemes in the context of \'etale cohomology.  (See Corollaries~\ref{double coset H1 curve} and \ref{double coset H1 rings}.)  
In the case of $G=\Gm$, this double coset space agrees with the Picard group of the scheme,  and so generalizes the notion of the class set (or group) of $G$ in the theory of global fields.
(For a different construction in a somewhat similar spirit, see \cite[Appendix~A]{CRR}, which generalizes a result of Harder, \cite[Section~2.3]{Harder}.)

This paper is organized as follows.
Following a discussion of preliminaries in Section~\ref{sect: background}, double coset spaces for semi-global fields are introduced in Section~\ref{sect: sgf Cl}, by building on results in the patching context.  In that context, the agreement of the finite double coset space with the local-global obstruction had been proven using a six-term Mayer-Vietoris sequence (see \cite[Theorem~3.5]{HHK:H1}); here a limiting argument yields a version of that sequence that relates $\Sha_X(F,G)$ to adelic double cosets.  For groups $G$ that are commutative, we can similarly rely on a long exact Mayer-Vietoris sequence that was given in the patching context in \cite{HHK:Hi}.  Afterwards,  Section~\ref{sect: ring Cl} establishes an adelic double coset space in terms of local rings, which is thus closer to the situation in the global field case.  This is done for reasonable one-dimensional schemes and for models of semi-global fields, using a Tannakian approach to patching that was recently established in \cite{HKL}.  In each of these situations, the double coset space is interpreted as a local-global obstruction, and examples are given that compute the double coset spaces explicitly.

I thank Julia Hartmann for comments on this manuscript.

\section{Background on semi-global fields and patching} \label{sect: background}

This section reviews the set-up established in \cite{HH:FP}, \cite{HHK}, and \cite{HHK:H1}.
Let $T$ be a complete discrete valuation ring, with fraction field $K$, residue field $k$, and uniformizer $t$.
A {\it semi-global field} $F$ {\it over} $K$ (or {\it over} $T$) is a finitely generated field over $K$ of transcendence degree one in which $K$ is algebraically closed.  A (projective) {\it model} $\XX$ of $F$ is an integral flat projective $T$-curve with function field $F$.  For example, $\P^1_{\Z_p}$, and a blow up of $\P^1_{\Z_p}$ at a closed point, are both models of $\QQ_p(x)$.  We may in particular consider models that are normal or regular as schemes.  A regular model whose closed fiber $X := \XX \times_T k$ is a normal crossings divisor (i.e., each irreducible component is regular and the components meet at ordinary double points) is called a {\it normal crossings model}.  (See \cite[Chapter~10]{Liu} for more about such models.)

We may consider local-global principles for torsors under a linear algebraic group $G$ over a semi-global field $F$.  Here and below, we require all linear algebraic groups to be smooth as schemes.  Under that assumption, the Galois cohomology set $H^1(F,G)$ classifies isomorphism classes of $G$-torsors over $F$ (as in the global field case), and 
we may consider
\[\Sha(F,G) := \ker(H^1(F,G) \to \prod_v H^1(F_v,G)),\]
where $v$ runs over the equivalence classes of non-archimedean absolute values on $F$; or equivalently, here, the discrete valuations on $F$.  This Tate-Shafarevich set is the obstruction to a $G$-torsor over $F$ being trivial (or equivalently, having an $F$-point) whenever it is trivial over each $F_v$.  (Some authors have restricted $v$ just to those discrete valuations that are trivial on $K$, in which case the obstruction is in general larger; e.g., see 
\cite{CH}, \cite{HSS}, \cite{HS16}.)

A variant local-global obstruction for a semi-global field $F$, also motivated by the global field case, concerns the closed fiber $X$ of a model $\XX$ of $F$, and is given by
\[\Sha_X(F,G) := \ker(H^1(F,G) \to \prod_{P\in X} H^1(F_P,G)),\]
where $P$ ranges over all the points of $X$ (including generic points) and where $F_P$ is the fraction field of the complete local ring $\wh R_P := \wh \OO_{\XX,P}$.  

These two obstructions are pointed sets, and are abelian groups if $G$ is commutative.  By \cite[Proposition~8.2]{HHK:H1}, $\Sha_X(F,G) \subseteq \Sha(F,G)$ if $\XX$ is a regular model, and so the local-global principle with respect to discrete valuations can be shown to fail by proving the non-vanishing of $\Sha_X(F,G)$.  Moreover, in some situations it is known that $\Sha_X(F,G) = \Sha(F,G)$; e.g., see \cite[Theorem~3.2]{CHHKPS}).  

To study $\Sha_X(F,G)$, one can approximate it by smaller obstruction sets $\Sha_\PP(F,G)$, where $\PP$ ranges over non-empty finite sets of closed points of $X$ that contain all the singular points of $X$.  These are defined using rings and fields that arise in the framework of patching, where they had earlier been used in inverse Galois theory over semi-global fields (e.g., \cite{Ha87}, \cite{Pr}, \cite{Ha:MSRI}).  These rings and fields are motivated by the analogy between viewing the general fiber of $\XX \to \Spec(T)$ as a $K$-curve under the $t$-adic topology (as in rigid geometry), and viewing a complex algebraic curve as a Riemann surface under the complex metric topology.  

More precisely, if $U$ is a non-empty irreducible affine open subset of $X$, let $R_U \subset F$ be the subring of elements regular along $U$; and let $\wh R_U$ be the $t$-adic completion of $R_U$, with fraction field $F_U$.  
Thus $\wh R_U \subseteq \wh R_{U'}$ and $F_U \subseteq F_{U'}$ if $U \subseteq U'$.
If $P$ is a closed point of $X$, then a {\it branch} of $X$ at $P \in X$ is a height one prime $\wp$ of $\wh R_P$ containing $t$.  These are in bijection with the irreducible components of the restriction of $X$ to $\Spec(\wh R_P)$.  For such $P$ and $\wp$, let $R_\wp$ be the localization of $\wh R_P$ at $\wp$; and let $\wh R_\wp$ be the completion of $R_\wp$, with fraction field $F_\wp$.  Now given a choice of a finite set $\PP$ as above, let $\UU$ be the set of connected components of $X \smallsetminus \PP$, and let $\BB$ be the set of branches of $X$ at the points of $\PP$.
We thus have a finite set of overfields of $F$; viz., $F_P$ (for $P \in \PP$), $F_U$ (for $U \in \UU$), and $F_\wp$ (for $\wp \in \BB$).  Here $F_\wp$ contains $F_P$ and $F_U$, if $\wp$ is a branch at $P$ on (the closure of) $U \in \UU$; it also contains $F_\eta$, where $\eta$ is the generic point of $U$.  We may view each $F_P$ and $F_U$ as the field of meromorphic functions on a $t$-adic neighborhood on the general fiber of $\XX$, with the fields $F_\wp$ corresponding to the overlaps.  Analogously to the behavior of meromorphic functions with respect to a metric open covering of a Riemann surface, the set of fields $F_P$ and $F_U$ satisfies an {\it intersection property}: their intersection 
(or strictly speaking, inverse limit) is equal to $F$,  viewing $F_P,F_U \subset F_\wp$ as above.

The above fields $F_P,F_U,F_\wp$ have been useful in computing invariants of semi-global fields, especially concerning quadratic forms and Brauer groups (e.g., see \cite{HHK}, \cite{PS}).  They also give rise to 
the local-global obstruction
\[\Sha_\PP(F,G) := \ker\bigl(H^1(F,G) \to \prod_{\xi \in \PP \cup \UU} H^1(F_\xi,G)\bigr).\]  
If $\XX$ is a normal model of $F$, then $\Sha_\PP(F,G) \subseteq \Sha_X(F,G)$ for each choice of $\PP$ as above; moreover $\Sha_X(F,G)$ is the union of its subsets $\Sha_\PP(F,G)$, if $\PP$ ranges over all finite sets as above (see \cite[Corollary~5.9]{HHK:H1}).  

One can identify $\Sha_\PP(F,G)$ with a (finite) double coset space via a six-term Mayer-Vietoris sequence in Galois cohomology.  Namely, by~\cite[Theorem~3.5]{HHK:H1}, there is an exact sequence of pointed sets
\begin{equation} \label{ses diagram patch}
\small{
\xymatrix{
1 \ar[r] & H^0(F,G) \ar[r]^-{\alpha^0} &
\prod_{P \in \PP} H^0(F_P,G) \times \prod_{U \in \UU} H^0(F_U,G) \ar[r]^-{\beta^0} &
\prod_{\wp \in \BB} H^0(F_\wp,G) \ar@<-2pt> `d[l]
`[lld]^-{\delta^0} [lld] \\
& H^1(F, G) \ar[r]^-{\alpha^1} & \prod_{P \in \PP} H^1(F_P,G) \times \prod_{U \in \UU}
H^1(F_U,G) \ar@<.5ex>[r]^-{\beta_1^1} \ar@<-.5ex>[r]_-{\beta_2^1} & \prod_{\wp \in \BB}
H^1(F_\wp,G). 
\\
}}
\end{equation} 
Here $H^0(F,G) = G(F)$, and similarly for the other terms in the first row.  Thus we obtain a bijection of pointed sets 
\begin{equation} \label{sgp bij}
\prod_{U \in \UU} G(F_U) \ \backslash {\prod_{\wp \in \BB}} \ G(F_\wp) \ / \prod_{P \in \PP} G(F_P) \cong \Sha_\PP(F,G),
\end{equation} 
between a double coset space and a Tate-Shafarevich set, in the patching context.  A key ingredient is the intersection property.

In the case of a commutative group $A$, the Galois cohomology sets $H^i(F,A)$ are abelian groups and are defined for all $i \ge 0$.  Under additional hypotheses (that $\cha(k)=0$ or $A$ has finite order not divisible by $\cha(k)$), it was shown in \cite[Theorem~3.1.2]{HHK:Hi} that
(\ref{ses diagram patch}) extends to a long exact Mayer-Vietoris sequence:
\begin{equation} \label{les diagram patch}
\small{
\xymatrix{
0 \ar[r] & H^0(F,A) \ar[r]^-\Delta & \prod_{P \in \PP} H^0(F_P,A) \times \prod_{U \in
\UU} H^0(F_U,A) \ar[r]^-{-} & \prod_{\wp \in \BB} H^0(F_\wp,A) 
\ar `d[l] `[lld] [lld] \\
& H^1(F, A) \ar[r]^-\Delta & \prod_{P \in \PP} H^1(F_P, A) \times \prod_{U \in
\UU} H^1(F_U, A) \ar[r]^-{-} & \prod_{\wp \in \BB} H^1(F_\wp, A) 
\ar `d[l] `[lld] [lld] \\ & H^2(F, A) 
\ar[r]^-\Delta &
{\ \ \cdots .\hskip 2.2in}
}}
\end{equation}
This provides information about higher local global principles and higher double coset spaces, in the patching context.

\section{Adelic-type double cosets over semi-global fields} \label{sect: sgf Cl}

This section introduces an adelic-type double coset space in the context of a semi-global field, forming a bridge between the classical adelic double cosets for global fields and the finite double cosets over semi-global fields reviewed in Section~\ref{sect: background}.  This double coset space is then interpreted in terms of local-global principles via a Mayer-Vietoris sequence, and explicit examples are computed.  (Afterwards, Section~\ref{sect: ring Cl} considers a closer analog to the global field case, using local rings rather than fields.)

The double coset space we introduce here will be in bijection with the elements of the Tate-Shafarevich obstruction set $\Sha_X(F,G) := \ker(H^1(F,G) \to \prod_P H^1(F_P,G))$ that was recalled in Section~\ref{sect: background}.  It is thus natural to expect that it would be obtained by modifying the bijection (\ref{sgp bij}) by taking the products over $G(F_\wp)$ for all branches $\wp$ of the closed fiber $X$ of $\XX$ at closed points of $X$; over $G(F_P)$ for all closed points $P \in X$; and over $G(F_\eta)$ for all generic points $\eta$ of $X$ (one for each irreducible component).  But an unexpected issue arises; viz., the intersection property fails for these fields $F_P,F_\eta,F_\wp$:

\begin{ex} \label{int counterex}
Let $k$ be a field of characteristic zero; let $T=k[[t]]$; and let $\XX$ be the projective $x$-line over $T$.  
Then there is one generic point $\eta$ on the closed fiber $X = \P^1_k$.  The corresponding function field is the semi-global field $F=k((t))(x)$.
For each closed point $P$ of $X$, the field $F_\eta = k(x)((t))$ is contained in $F_\wp$, where $\wp$ is the branch of $X$ at $P$.  In particular, if $P$ is the point $x=c$ on $X$ for some $c \in k$, then $\wh R_P = k[[x-c,t]]$, $\wh R_\wp = k((x-c))[[t]]$, and 
$F_\wp = k((x-c))((t)) \supset F_\eta$.  Consider the element $a_\eta = \sum_{n=1}^\infty t^n/(x-n) \in F_\eta$.  In the case that $P$ is the point $x=m$ on $X$ for some positive integer $m$, the series $(x-m)\sum_{n=1}^\infty t^n/(x-n)$ is an element of in $k[[x-m,t]] = \wh R_P$; whereas for all other closed points $P \in X$,  the series $\sum_{n=1}^\infty t^n/(x-n)$ lies in $\wh R_P$. So in both cases, the element $a_\eta$, viewed as an element of $F_\wp$, is the image of an element $a_P \in F_P = \Frac(\wh R_P)$ given by the same expression.  
The elements $a_P$ and $a_\eta$ together define an element in the inverse limit of the fields $F_P$, $F_\eta$, and $F_\wp$ that is not induced by any element of $F$ (or even any element of some $F_U$, for $U$ an affine open subset of $X$), since it has infinitely many poles on the closed fiber $X$.  Thus this inverse limit is strictly bigger than $F$.  (See also Remark~\ref{6-term_pts_rk} below.)
\end{ex}

Instead, the double coset space we consider, and the associated Mayer-Vietoris sequence, will be somewhat different from the naive expectation.  To obtain it from the patching versions of those objects, we use the following lemma (where the ``kernel'' of a doubled map is the equalizer).

\begin{lem} \label{expand ex seq}
Let 
\begin{equation} \label{first ex seq}
D^0 \ {\buildrel \alpha^0 \over \to}\  A^0  \ {\buildrel \beta^0 \over \to}\  B^0 \ {\buildrel \delta^0 \over \to} \ D^1 \ {\buildrel \alpha^1 \over \to} \ A^1  \ 
\mathrel{\mathop{\rightrightarrows}^{\beta^1_1}_{\mathrm{\beta^1_2}}}\  B^1
\end{equation}
be an exact sequence of pointed sets.  Let $C^0$ be a group, let $C^1$ be a pointed set, and for $i=0,1$ let $\epsilon^i:A^i \to C^i$ be a map of pointed sets.
\renewcommand{\theenumi}{\alph{enumi}}
\renewcommand{\labelenumi}{(\alph{enumi})}
\begin{enumerate}
\item  \label{enlarged seq sets}
Then 
\begin{equation} \label{longer ex seq}
D^0 \ {\buildrel \alpha'^0 \over \to}\  A^0 \times C^0 \ {\buildrel \beta'^0 \over \to}\  B^0 \times C^0\ {\buildrel \delta'^0 \over \to} \ D^1 \ {\buildrel \alpha'^1 \over \to} \ A^1 \times C^1 \ 
\mathrel{\mathop{\rightrightarrows}^{\beta'^1_1}_{\mathrm{\beta'^1_2}}}\  B^1 \times C^1
\end{equation}
is also an exact sequence of pointed sets, where 
$\alpha'^i(d) = (\alpha^i(d),\epsilon^i \alpha^i(d))$ for $i=0,1$; where 
$\beta'^0(a,c) = (\beta^0(a),\epsilon^0(a)^{-1}\cdot c)$,
$\beta'^1_1(a,c)= (\beta_1^1(a),c)$,  and $\beta'^1_2(a,c)= (\beta_2^1(a),\epsilon^1(a)))$; and where $\delta'^0(b,c) = \delta^0(b)$.
\item \label{enlarged seq groups}
Suppose that $A^i,B^i,C^i,D^i$ are abelian groups; that $\alpha^i$, $\beta^0$, $\beta_j^1$, $\delta^0$, $\epsilon^i$ are group homomorphisms;
and that (\ref{first ex seq}) is an exact sequence of abelian groups if we replace the double arrow with the homomorphism $\beta^1$ given by $\beta^1(a)=\beta_2^1(a)^{-1}\beta_1^1(a)$. 
Then (\ref{longer ex seq}) is also an exact sequence of abelian groups if we similarly replace the double arrow with the map $\beta'^1$ given by $\beta'^1(a,c)=\beta'^1_2(a,c)^{-1}\beta'^1_1(a,c)$. 
\end{enumerate}
\end{lem}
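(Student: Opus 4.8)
The plan is to verify exactness of (\ref{longer ex seq}) directly at each interior node, reducing in each case to the exactness of (\ref{first ex seq}) at the corresponding node. Before that I would record two preliminary observations. First, each primed map sends basepoint to basepoint: for instance $\beta'^0(*,*) = (\beta^0(*), \epsilon^0(*)^{-1}\cdot *) = (*,*)$, since the basepoint of the group $C^0$ is its identity and $\epsilon^0$ is a map of pointed sets, so (\ref{longer ex seq}) is at least a sequence of pointed sets; and if (\ref{first ex seq}) is moreover injective at the left end, so is $\alpha'^0$, as $\alpha^0$ is its first coordinate. Second, in the group-theoretic setting of part (\ref{enlarged seq groups}) a short computation gives $\beta'^1(a,c) = \beta'^1_2(a,c)^{-1}\beta'^1_1(a,c) = (\beta^1(a), \epsilon^1(a)^{-1}c)$, which is formally identical to $\beta'^0$; this symmetry between the two halves of the sequence makes the verifications uniform.

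At the two ``source'' nodes $A^0 \times C^0$ and $A^1 \times C^1$, the second coordinate pins down $c$. At $A^0 \times C^0$, an element $(a,c)$ lies in the kernel of $\beta'^0$ precisely when $\beta^0(a) = *$ and $c = \epsilon^0(a)$; by exactness of (\ref{first ex seq}) at $A^0$ the first condition forces $a = \alpha^0(d)$, whence $(a,c) = (\alpha^0(d), \epsilon^0\alpha^0(d)) = \alpha'^0(d)$, giving $\ker\beta'^0 = \im\alpha'^0$. At $A^1\times C^1$, the equalizer of $\beta'^1_1$ and $\beta'^1_2$ consists of those $(a,c)$ with $\beta^1_1(a)=\beta^1_2(a)$ and $c = \epsilon^1(a)$; exactness of (\ref{first ex seq}) at $A^1$ identifies the first condition with $a\in\im\alpha^1$, and again the value of $c$ is forced, so the equalizer equals $\im\alpha'^1$. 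Both arguments carry over verbatim to part (\ref{enlarged seq groups}), with the equalizer replaced by $\ker\beta'^1$.

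At the node $D^1$ the connecting map forgets the $C^0$-coordinate, so $\im\delta'^0 = \im\delta^0$; and since $\epsilon^1$ preserves basepoints, the condition defining $\ker\alpha'^1$ reduces to $\alpha^1(d)=*$, i.e. $\ker\alpha'^1 = \ker\alpha^1$. Exactness of (\ref{first ex seq}) at $D^1$ then yields $\im\delta'^0 = \ker\alpha'^1$. The delicate node is $B^0\times C^0$: here exactness must be read through the connecting map, i.e. the fibers of $\delta'^0$ should be exactly the $\beta'^0$-translates (in part (\ref{enlarged seq sets}), the orbits of the relevant action; in part (\ref{enlarged seq groups}), the cosets of $\im\beta'^0$). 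The key point is that $\delta'^0(b,c)=\delta^0(b)$ ignores $c$, so each fiber is a fiber of $\delta^0$ times the full group $C^0$; correspondingly, since for fixed $a$ the map $c\mapsto\epsilon^0(a)^{-1}c$ is a bijection of the group $C^0$, one has $\im\beta'^0 = \im\beta^0\times C^0$, and more generally each $\beta'^0$-translate sweeps out all of $C^0$ in the second coordinate. Thus the statement at $B^0\times C^0$ collapses onto the statement at $B^0$.

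I expect this last node to be the main obstacle, precisely because it is the one place where one must keep track of the pointed-set (orbit) formulation of exactness rather than a naive equality of image and preimage, and where the hypothesis that $C^0$ is a \emph{group} (not merely a pointed set) is indispensable. For part (\ref{enlarged seq groups}) these same computations go through with honest kernels and images of homomorphisms, and the orbit bookkeeping at $B^0\times C^0$ evaporates, so that case is if anything more routine once part (\ref{enlarged seq sets}) is in place.
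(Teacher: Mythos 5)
Your proposal is correct and takes essentially the same route as the paper's proof: a direct node-by-node verification in which the only nontrivial point is exactness at $B^0 \times C^0$, where---exactly as in the paper, which sets $c_0 = \epsilon^0(a)\cdot c$ so that $(b,c)=\beta'^0(a,c_0)$---the group structure of $C^0$ is what lets the second coordinate be adjusted freely. Your treatment of part (\ref{enlarged seq groups}) likewise matches the paper's (the maps are homomorphisms, with $\beta'^1(a,c)=(\beta^1(a),\epsilon^1(a)^{-1}c)$, and exactness of abelian groups coincides with pointed-set exactness), the paper's write-up being merely terser.
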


\begin{proof}
In (\ref{enlarged seq sets}), the fact that (\ref{longer ex seq}) is a complex follows immediately from the definitions of the maps and the fact that (\ref{first ex seq}) is a complex.  For exactness at $B^0 \times C^0$, suppose that $\delta'^0(b,c)$ is the distinguished element of $D^1$.  Then $b=\beta^0(a)$ for some $a \in A^0$, by the exactness of (\ref{first ex seq}); and so $(b,c) = \beta'^0(a,c_0)$, where $c_0 = \epsilon^0(a)\cdot c \in C^0$.  Exactness at the other terms of (\ref{longer ex seq}) is immediate from the definitions of the maps.

In (\ref{enlarged seq groups}), the maps $\alpha'^i, \delta'^0$ are clearly homomorphisms; and so are the maps 
$\beta'^i$ because $(g,h) \mapsto h^{-1}g$ is a group homomorphism in the case of abelian groups.  Since a sequence of abelian groups is exact if and only if it is exact as a sequence of pointed sets, the assertion is immediate.
\end{proof}

We return to our geometric situation.  Let $\XX$ be a model of a semi-global field $F$ over $T$, with closed fiber $X$ over the residue field $k$ of $T$.  
For $i=0,1$, let $X_{(i)}$ denote the set of points of $X$ whose closure has dimension~$i$.  Thus 
$X_{(0)}$ is the set of closed points $P$ of $X$, and $X_{(1)}$
is the finite set of generic points of irreducible components of $X$.  For any subset $S \subseteq X$, let $\BB_S$ be the set of all branches of $X$ at closed points of $S$.  For each $P \in X_{(0)}$ we have the associated complete local ring $\wh R_P$ and its fraction field $F_P$; and similarly we have $\wh R_\eta$ and $F_\eta$ for each $\eta \in X_{(1)}$, and $\wh R_\wp$ and $F_\wp$ for $\wp \in \BB_X$.

For any functor $\Phi$ from field extensions of $F$ to (pointed) sets, define the 
\textit{restricted product} 
$\prod'_{\wp \in \BB_X} \Phi(F_\wp)$ to be the 
direct limit of the family 
$\{\prod_{\wp \in \BB_S} \Phi(F_\wp) \times \prod_{P \not\in S} \Phi(F_P)\}$,
indexed by finite subsets $S \subset X$.  
The natural map $\prod'_{\wp \in \BB_X} \Phi(F_\wp) \to \prod_{\wp \in \BB_X} \Phi(F_\wp)$ is injective provided that the maps $\Phi(F_P) \to \Phi(F_\wp)$ are injective for $\wp$ a branch at $P$.  
For example, for a linear algebraic group $G$ over $F$, there is the functor $H^i(-,G)$ for $i=0,1$, and also for $i>1$ if $G$ is commutative.  Here $H^0(F_P,G) \to H^0(F_\wp,G)$ is injective.  Hence
the restricted product $\prod'_{\wp \in \BB_X} G(F_\wp)$ is the subset of $\prod_{\wp \in \BB_X} G(F_\wp)$ consisting of the families $(g_\wp)_{\wp \in \BB_X}$ that satisfy $g_\wp \in G(F_P)$ for all but finitely many pairs $\wp,P$ where $\wp$ is a branch at $P$.  (But caution: $\prod'_{\wp \in \BB_X} H^1(F_\wp,G)\to\prod_{\wp \in \BB_X} H^1(F_\wp,G)$ is not injective; see Remark~\ref{6-term_pts_rk}.)

Similarly, if $G$ is a smooth affine group scheme of finite type over $\XX$, we can define the \textit{restricted product} 
$\prod'_{\wp \in \BB_X} H^i(\wh R_\wp,G)$ for $i=0,1$, and also for $i>1$ if $G$ is commutative.  Namely, this is the direct limit of the family $\{\prod_{\wp \in \BB_S} H^i(\wh R_\wp,G) \times \prod_{P \not\in S} H^i(\wh R_P,G)\}$,
indexed by finite subsets $S \subset X$.

The  restricted product, as opposed to the full product, is one of the two ingredients that we need to introduce in order to obtain a well-behaved double coset space over a semi-global field.  The other such ingredient is a field that will take the place of $F_\eta$, and which is described next. 

For $\eta \in X_{(1)}$, let $\UU_\eta$ be the collection of connected affine open neighborhoods of $\eta$ in $X$ all of whose closed points are unibranched on $X$; and let 
$R_\eta^\h \subseteq \wh R_\eta$ be the subring $\bigcup_{U \in \UU_\eta} \wh R_U$.  This is a Henselian discrete valuation ring with respect to the $\eta$-adic valuation, such that its fraction field $F_\eta^\h \subseteq F_\eta$ is the subfield $\bigcup_{U \in \UU_\eta} F_U$, and its residue field is the residue field at $\eta$ (i.e., the function field of the corresponding irreducible component of $X^\red$); 
see \cite[Lemma~3.2.1]{HHK:Hi}.  
Thus $R_\eta^\h$ contains the henselization $\til R_\eta$ of the local ring $R_\eta$; and indeed, every element of $\til R_\eta$ lies in $\wh R_U$ for some $U\in\UU_\eta$ (see the proof of \cite[Proposition~5.8]{HHK:H1}) and hence lies in~$R_\eta^\h$.

\smallskip

 Note that if a functor $\Phi$ as above is of finite presentation (i.e.\ if $\displaystyle\Phi(\lim_\to E_\nu) = \lim_\to(\Phi(E_\nu))$ for every direct system of $F$-algebras $\{E_\nu\}$), then there is a natural map
$\prod_{\eta \in X_{(1)}} \Phi(F_\eta^\h) \to \prod'_{\wp \in \BB_X} \Phi(F_\wp)$.
In particular, if $G$ is a linear algebraic group over $F$ then for $i=0,1$ there is an induced map 
$\prod_{\eta \in X_{(1)}} H^i(F_\eta^\h,G) \to \prod'_{\wp \in \BB_X} H^i(F_\wp,G)$; and similarly for $i>1$ if $G$ is commutative.

We now obtain the desired Mayer-Vietoris sequence with respect to points on the closed fiber:

\begin{thm} \label{6-term_sequence_points}
Let $F$ be a semi-global field over a complete discrete valuation ring $T$ with fraction field $K$, and let $\XX$ be a normal model of $F$ over $T$, with closed fiber $X$. 
Then for any linear algebraic group $G$ over $F$, 
we have an exact sequence of pointed sets
\begin{equation} \label{ses diagram}
\small{
\xymatrix{
1 \ar[r] & H^0(F,G) \ar[r] &
\prod_{P \in X_{(0)}} H^0(F_P,G) \times \prod_{\eta \in X_{(1)}} H^0(F_\eta^\h,G) \ar[r] & \prod'_{\wp \in \BB_X} H^0(F_\wp,G) \ar@<-2pt> `d[l]
`[lld] [lld] \\  
& H^1(F, G) \ar[r] & \prod_{P \in X_{(0)}} H^1(F_P,G) \times \prod_{\eta \in X_{(1)}} H^1(F_\eta^\h,G) \ar@<.5ex>[r] \ar@<-.5ex>[r] & \prod'_{\wp \in \BB_X} H^1(F_\wp,G).
\\
}}
\end{equation}   
\end{thm}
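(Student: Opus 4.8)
The plan is to realize the sequence (\ref{ses diagram}) as a filtered direct limit, over finite sets $\PP$ of closed points of $X$ containing all the (finitely many) non-unibranched points of $X$, of the patching sequences (\ref{ses diagram patch}), after first enlarging each of those sequences by means of Lemma~\ref{expand ex seq}. Throughout, $\PP$ ranges over such finite sets, and I write $\UU_\PP$ and $\BB_\PP$ for the corresponding set of connected components of $X \smallsetminus \PP$ and the set of branches at points of $\PP$. The first thing I would record is a geometric observation. Since $\XX$ is a normal model, $X$ is connected (as $\XX$ is integral and proper over the complete, hence Henselian, local ring $T$), and its singular points, which are exactly the non-unibranched ones, are finite in number. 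Once $\PP$ contains all of these (and is nonempty), each irreducible component $Z$ of $X$, with generic point $\eta$, meets $\PP$, so $Z \smallsetminus \PP$ is a nonempty affine open of $Z$, irreducible hence connected, all of whose closed points are unibranched; and as $Z$ varies these are precisely the members of $\UU_\PP$. Thus $\UU_\PP \leftrightarrow X_{(1)}$, each such $U$ lies in $\UU_\eta$, and as $\PP$ grows the opens attached to a fixed $\eta$ form a cofinal subsystem of $\UU_\eta$. Consequently, for a functor $\Phi$ of finite presentation on field extensions of $F$,
\[
\varinjlim_\PP \ \prod_{U \in \UU_\PP} \Phi(F_U) \ = \ \prod_{\eta \in X_{(1)}} \ \varinjlim_{U \in \UU_\eta} \Phi(F_U) \ = \ \prod_{\eta \in X_{(1)}} \Phi(F_\eta^\h),
\]
using $F_\eta^\h = \bigcup_{U \in \UU_\eta} F_U$.

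Next I would enlarge the patching sequence. For a fixed $\PP$, I apply Lemma~\ref{expand ex seq} to (\ref{ses diagram patch}) with $D^i = H^i(F,G)$, with $A^i = \prod_{P \in \PP} H^i(F_P,G) \times \prod_{U \in \UU_\PP} H^i(F_U,G)$, with $B^i = \prod_{\wp \in \BB_\PP} H^i(F_\wp,G)$, and with the adjoined factor $C^i = \prod_{P \notin \PP} H^i(F_P,G)$; for $\epsilon^i \colon A^i \to C^i$ I take the map induced by the restrictions $F_U \to F_P$ sending each $P \notin \PP$ to its (unique) component $U$. Here $C^0 = \prod_{P \notin \PP} G(F_P)$ is indeed a group and $C^1$ a pointed set, as the lemma requires. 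By part~(\ref{enlarged seq sets}) (and part~(\ref{enlarged seq groups}) when $G$ is commutative) one obtains an exact sequence whose middle terms are $A^i \times C^i = \prod_{P \in X_{(0)}} H^i(F_P,G) \times \prod_{U \in \UU_\PP} H^i(F_U,G)$ and whose right-hand terms are $B^i \times C^i = \prod_{\wp \in \BB_\PP} H^i(F_\wp,G) \times \prod_{P \notin \PP} H^i(F_P,G)$, which is exactly the term indexed by $S = \PP$ in the direct system defining the restricted product $\prod'_{\wp \in \BB_X} H^i(F_\wp,G)$. Injectivity of $H^0(F,G)$ into $A^0 \times C^0$ persists, since $\alpha'^0$ is injective whenever $\alpha^0$ is, yielding the leftmost ``$1 \to$''.

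Finally I would pass to the limit. Filtered direct limits of exact sequences of pointed sets are exact (filtered colimits commute with the finite limits---images, basepoint-preimages, and equalizers---that define exactness here), so it suffices to identify each colimit term. The outer terms $H^i(F,G)$ are constant; the middle terms converge to $\prod_{P \in X_{(0)}} H^i(F_P,G) \times \prod_{\eta \in X_{(1)}} H^i(F_\eta^\h,G)$, the $\prod_P$ factor being already constant and the $\prod_U$ factor converging as in the display above because $H^i(-,G)$ is of finite presentation; and the right-hand terms converge, by construction, to the restricted product $\prod'_{\wp \in \BB_X} H^i(F_\wp,G)$, the cofinal subsystem $\{\,S = \PP\,\}$ computing the same limit as all finite $S \subset X$. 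This produces precisely (\ref{ses diagram}).

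I expect the main obstacle to be the routine-but-delicate verification underlying the second and third paragraphs: that the maps manufactured abstractly by Lemma~\ref{expand ex seq}---in particular the $\epsilon^i$-twisted boundary and the doubled map---coincide with the geometrically meaningful restriction and branch-comparison maps, and that the enlarged sequences, with the natural transition maps (identity on the $\prod_{P \in X_{(0)}}$ factor, refinement $F_U \to F_{U'}$ on the opens, and ``pass a point from $C$ into $A$ and record its branches'' on the right), do form a genuine direct system of exact sequences matching the defining system of the restricted product. Concretely one checks that for $P' \notin \PP$ in component $U$ the recorded class $\epsilon^0(\,\cdot\,)^{-1}\!\cdot c$ restricts, at each branch $\wp$ at $P'$ arising when $P'$ enters $\PP$, to the patching comparison $a_U|_{F_\wp}^{-1}\,a_{P'}|_{F_\wp}$, so the transition maps agree. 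A secondary point requiring care is the finite-presentation property of $H^i(-,G)$ (Galois cohomology commuting with the filtered union $\bigcup_U F_U = F_\eta^\h$), which legitimizes the displayed computation.
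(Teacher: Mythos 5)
Your proposal is correct and takes essentially the same route as the paper's own proof: both apply Lemma~\ref{expand ex seq} to the patching sequence (\ref{ses diagram patch}) with $C^i=\prod_{P\notin\PP}H^i(F_P,G)$ and the same projection-plus-restriction maps $\epsilon^i$, then pass to the filtered colimit over the sets $\PP$, identifying $\varinjlim_\PP \prod_{U\in\UU_\PP}H^i(F_U,G)$ with $\prod_{\eta\in X_{(1)}}H^i(F_\eta^\h,G)$ via cofinality in $\UU_\eta$ and identifying the limit of the terms $B^i\times C^i$ with the restricted product $\prod'_{\wp\in\BB_X}H^i(F_\wp,G)$. (One cosmetic caveat: the non-unibranched points of $X$ need not be exactly the singular points---a cusp is singular but unibranched---but your argument, like the paper's, uses only that they are finite in number and contained in $\PP$, so nothing is affected.)
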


\begin{proof}
Let $\PP$ be a non-empty finite set of closed points of $\XX$ that includes all the points at which the closed fiber $X$ is not unibranched.  Let $\UU$ be the set of connected components of the complement of $\PP$ in $X$, and let $\BB$ be the set of branches of $X$ at the points of $\PP$.  
As discussed in Section~\ref{sect: background}, by~\cite[Theorem~3.5]{HHK:H1}, there is the exact sequence (\ref{ses diagram patch}) of pointed sets.

For $i=0,1$, let $A^i=\prod_{P \in \PP} H^i(F_P,G) \times \prod_{U \in \UU} H^i(F_U,G)$; 
let $B^i = \prod_{\wp \in \BB} H^i(F_\wp,G)$; and let $D^i=H^i(F,G)$.  Also, let 
$\PP'$ be the complement of $\PP$ in $X_{(0)}$, and let 
$C^i = \prod_{P \in \PP'} H^i(F_P,G)$.  Define $\epsilon^i:A^i \to C^i$ as the composition 
$A^i \to \prod_{U \in \UU} H^i(F_U,G) \to C^i$, where the first map is the projection onto the second factor in the definition of $A^i$, and the second map is induced by the inclusions $F_U \hookrightarrow F_P$ for each closed point $P \in U$.  (Note that each $P \in \PP'$ lies in a unique $U \in \UU$.)

For short, write $H^i_F := H^i(F,G)$ for $i=1,2$; and for any indexed set $\{F_s\}_{s \in S}$ of overfields of $F$ 
write $H^i_S :=  \prod_{s \in S} H^i(F_s,G)$ for $i=0,1$.  Applying Lemma~\ref{expand ex seq}(\ref{enlarged seq sets}) to the exact sequence (\ref{ses diagram patch}), we obtain the following expanded exact sequence:
\[\xymatrix{
1  \ar[r] & H^0_F  \ar[r]^-{\alpha_{0,\UU}} & H^0_{X_{(0)}} \times H^0_{\UU} \ar[r]^{\beta_{0,\UU}} & H^0_{\BB}
 \times H^0_{\PP'}  \ar[r]^-{\delta_{0,\UU}} & H^1_F \ar[r]^-{\alpha_{1,\UU}} & H^1_{X_{(0)}} \times H^1_\UU 
 \ar@<.5ex>[r]^{\beta_{1,\UU}'^1} \ar@<-.5ex>[r]_{\beta_{2,\UU}'^1} & H^1_\BB \times H^1_{\PP'}.  \\
}\]
Here the initial $1$, corresponding to the triviality of $\ker(\alpha_{0,\UU})$, follows from the injectivity of $H^0(F,G) \to H^0(F_P,G)$ for any $P \in X_{(0)}$.  Also, in the terms $H^i_{X_{(0)}} \times H^i_{\UU}$,
we use that $H^i_{X_{(0)}} = H^i_\PP \times H^i_{\PP'}$.

We can now let the above finite set $\PP$ vary, with $\UU$, $\BB$, and $\PP'$ varying along with it.  The sets $\PP$ form a direct system; 
and because of the compatibility of the maps in the above exact sequence, as $\PP$ varies, we obtain a direct system of exact sequences.  Here 
\[\lim_\to H^i_\UU =\!\prod_{\eta \in X_{(1)}} \!H^i(F_\eta^\h,G) \ \ \ {\rm and} \ \ \ \lim_\to\, (H^i_{\BB} \times H^i_{\PP'}) = \!{\prod_{\wp \in \BB_X}}\!\!\!'\ H^i(F_\wp,G).\]
Since direct limits preserve exactness, we obtain the asserted exact sequence of pointed sets.
\end{proof}

\begin{rem} \label{6-term_pts_rk}
Theorem~\ref{6-term_sequence_points} would no longer hold if $F_\eta^\h$ were replaced by $F_\eta$.  First note that such a replacement would also require replacing $\prod' H^i(F_\wp,G)$ by $\prod H^i(F_\wp,G)$, or else the last map in each row would no longer be defined.   Doing both replacements would lead to the first row not being exact at the middle $H^0$ term even for the group~$G=\mbb G_a$ and $\XX=\P^1_{k[[t]]}$ with $\cha(k)=0$, by Example~\ref{int counterex}.  Moreover the second row would not be exact at its middle term with the same choice of $\XX$ and taking $G$ to be cyclic of order two.  Namely, for each positive integer $n$ let $P_n$ be the point on the closed fiber $X=\P^1_k$ where $x=n$, and take the $G$-torsor over $F_{P_n}$ given by the $G$-Galois extension defined by 
$y^2=(x-n)(x-n-t)$.  At all other points on $X$ (including $\eta$), take the trivial $G$-torsor.  Then this element of the middle term of the second row is in the equalizer of the two maps to $\prod H^1(F_\wp,G)$ but is not in the image of $H^1(F,G)$, since a branched cover of $\XX$ cannot have infinitely many components of its branch locus.  (Note, though, that this element is not in the equalizer to $\prod' H^1(F_\wp,G)$, since that restricted product is defined as a direct limit, and at no finite level is the element induced by the above $G$-torsors trivial.  This also shows that $\prod' H^1(F_\wp,G)\to\prod H^1(F_\wp,G)$ is not injective.)  The same exactness issue on $H^1$ would arise if we retained $F_\eta^\h$ but
still replaced $\prod' H^i(F_\wp,G)$ by $\prod H^i(F_\wp,G)$.
\end{rem}

By analogy with the notion of a class set (or group) associated to a linear algebraic group over a global field being given by the adelic double coset space $G(F) \backslash G(\A(F,S)) / G(\A^\infty(F,S))$, we write \[\Cl_X(F,G) = \prod_{\eta \in X_{(1)}} G(F_\eta^\h) \ \backslash {\prod_{\wp \in \BB_X}}\!\!' \ G(F_\wp)\,/\!\!\prod_{P \in X_{(0)}}\!\!G(F_P)\] 
in the situation of Theorem~\ref{6-term_sequence_points}.

\begin{cor} \label{double coset H1}
The coboundary map of the exact sequence in Theorem~\ref{6-term_sequence_points} induces a bijection of pointed sets
$\Cl_X(F,G) \to \Sha_X(F,G)$.
\end{cor}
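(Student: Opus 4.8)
The plan is to extract the bijection from the exact sequence~(\ref{ses diagram}) by the usual recipe for the connecting map of a Mayer--Vietoris sequence of pointed sets, and then to identify the image of that map with $\Sha_X(F,G)$. Write $\delta$ for the connecting map $\prod'_{\wp} G(F_\wp) \to H^1(F,G)$ and $\alpha^1$ for the subsequent map $H^1(F,G) \to \prod_{P} H^1(F_P,G) \times \prod_{\eta} H^1(F_\eta^\h,G)$. The restricted product $\prod'_{\wp} G(F_\wp)$ carries a two-sided action of $\prod_{\eta} G(F_\eta^\h)$ and $\prod_{P} G(F_P)$, where an element $g_\eta$ (resp.\ $g_P$) acts on the $\wp$-coordinate by restriction along $F_\eta^\h \subseteq F_\wp$ (resp.\ $F_P \subseteq F_\wp$), using that $F_\eta^\h \subseteq F_\eta \subseteq F_\wp$ for each branch $\wp$ lying at the point $P$ on the component with generic point $\eta$; its orbit set is by definition $\Cl_X(F,G)$. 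Thus it suffices to prove (i) that $\delta$ induces a bijection $\Cl_X(F,G) \to \im\delta$, and (ii) that $\im\delta = \Sha_X(F,G)$.

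For (i), I would exploit the fact, established in the proof of Theorem~\ref{6-term_sequence_points}, that (\ref{ses diagram}) is the direct limit over finite sets $\PP$ (of closed points containing the non-unibranched ones) of the enlarged patching sequences produced from~(\ref{ses diagram patch}) by Lemma~\ref{expand ex seq}. At each finite level the bijection~(\ref{sgp bij}) identifies, via the connecting map, the corresponding finite double coset space with $\Sha_\PP(F,G)$; the factor $C^0 = \prod_{P \notin \PP} H^0(F_P,G)$ introduced by the Lemma enlarges the middle term (so that its limit is the restricted product) and, in the double-coset picture, enlarges the right-hand quotient from $\prod_{P \in \PP} G(F_P)$ towards $\prod_{P \in X_{(0)}} G(F_P)$. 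Passing to the direct limit, the left group tends to $\prod_{\eta} G(F_\eta^\h)$, the middle to $\prod'_{\wp} G(F_\wp)$, and the right group to $\prod_{P \in X_{(0)}} G(F_P)$, so the limit of the finite double coset spaces is exactly $\Cl_X(F,G)$. Since forming orbit sets commutes with filtered colimits and a direct limit of bijections is a bijection, $\delta$ induces a bijection $\Cl_X(F,G) \to \lim_{\to}\Sha_\PP(F,G) = \im\delta$, the last equality holding because filtered colimits preserve images.

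For (ii), exactness of~(\ref{ses diagram}) at $H^1(F,G)$ gives $\im\delta = \ker\alpha^1$, so I must show $\ker\alpha^1 = \Sha_X(F,G) = \ker\bigl(H^1(F,G) \to \prod_{P \in X} H^1(F_P,G)\bigr)$, where now $P$ ranges over all points of $X$ and a generic point $\eta$ contributes $F_\eta = \Frac(\wh R_\eta)$. The inclusion $\ker\alpha^1 \subseteq \Sha_X(F,G)$ is formal: the two kernels impose the same conditions at closed points, and at each $\eta$ the factorization $H^1(F,G) \to H^1(F_\eta^\h,G) \to H^1(F_\eta,G)$ coming from $F_\eta^\h \subseteq F_\eta$ shows that vanishing over $F_\eta^\h$ forces vanishing over $F_\eta$. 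The reverse inclusion is the heart of the matter and is where Artin's Approximation Theorem enters. Given $\xi \in \Sha_X(F,G)$, the associated $G$-torsor is a smooth $F$-variety possessing an $F_\eta$-point for each $\eta \in X_{(1)}$; because $F_\eta$ is the fraction field of the completion $\wh R_\eta$ of the excellent discrete valuation ring $\OO_{\XX,\eta}$, whose henselization $\til R_\eta$ has fraction field inside $F_\eta^\h$, Artin approximation upgrades this $F_\eta$-point over the completion to a point over the henselization, hence to an $F_\eta^\h$-point. Thus $\xi$ is trivial over every $F_\eta^\h$, i.e.\ $\xi \in \ker\alpha^1$, giving $\ker\alpha^1 = \Sha_X(F,G)$ as desired.

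I expect the Artin-approximation step of (ii) to be the main obstacle: the remaining inclusions are formal and step (i) is bookkeeping around a direct limit, whereas here one must spread the torsor out over $\OO_{\XX,\eta}$ so that the approximation theorem applies, and invoke the excellence of $\OO_{\XX,\eta}$ (equivalently, the validity of Artin approximation for $\til R_\eta$) in order to descend a rational point from the completion $F_\eta$ to the henselian field $F_\eta^\h$. Once this is in place, the corollary follows by composing the bijections of (i) and (ii), the resulting map $\Cl_X(F,G) \to \Sha_X(F,G)$ being the one induced by the connecting map $\delta$.
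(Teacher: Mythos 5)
Your proposal is correct and follows essentially the same route as the paper: the exact sequence of Theorem~\ref{6-term_sequence_points} identifies $\Cl_X(F,G)$, via the coboundary map, with $\ker\bigl(H^1(F,G)\to\prod_{P} H^1(F_P,G)\times\prod_{\eta} H^1(F_\eta^\h,G)\bigr)$, and that kernel is shown to equal $\Sha_X(F,G)$ by passing between $F_\eta$ and $F_\eta^\h$ using Artin approximation. The only cosmetic differences are that where you sketch the approximation step directly (spreading the torsor out over $\OO_{\XX,\eta}$ and descending a point from the completion to the henselization, whose fraction field lies in $F_\eta^\h$), the paper simply cites \cite[Proposition~5.8]{HHK:H1}, whose proof is precisely that argument, and your direct-limit bookkeeping in step~(i) makes explicit what the paper's one-line appeal to Theorem~\ref{6-term_sequence_points} leaves implicit.
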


\begin{proof}
By Theorem~\ref{6-term_sequence_points}, we have a bijection from the above double coset space to the kernel of the map $H^1(F, G) \to \prod_{P \in X_{(0)}} H^1(F_P,G) \times \prod_{\eta \in X_{(1)}} H^1(F_\eta^\h,G)$.
But by \cite[Proposition~5.8]{HHK:H1} (which relied on the Artin Approximation theorem, \cite[Theorem~1.10]{artin}), if a $G$-torsor over $F$ is trivial over $F_\eta$ for some $\eta \in X_{(1)}$, then it is trivial over some $F_U\subset F_\eta$ and hence over $F_\eta^\h$.  Moreover the converse also holds, since $F_\eta^\h \subseteq F_\eta$.  Hence 
\[\ker\biggl(H^1(F, G) \to\!\prod_{P \in X_{(0)}}\!H^1(F_P,G) \times\!\prod_{\eta \in X_{(1)}}\!H^1(F_\eta^\h,G)\biggr) =\Sha_X(F,G),\]
yielding the corollary.
\end{proof}

\begin{ex} \label{Sigma ex}
\renewcommand{\theenumi}{\alph{enumi}}
\renewcommand{\labelenumi}{(\alph{enumi})}
\begin{enumerate}
\item  \label{rational conn gps}
If $G$ is a rational connected linear algebraic group, then $\Sha_X(F,G)$ is trivial, by \cite[Theorem~5.10]{HHK:H1}.  Hence $\Cl_X(F,G)$ is trivial, by Corollary~\ref{double coset H1}.  In particular, this holds for the groups $\GL_n$ and $\PGL_n$.  The triviality of $\Cl_X(F,\GL_n)$ can also be seen from the exact sequence (\ref{ses diagram}) because $H^1(F,\GL_n)$ vanishes by Hilbert~90.  The case of $\PGL_n$ can then be deduced directly from the case of $\GL_n$.  Namely, every element of $\Cl_X(F,\PGL_n)$ is represented by an element of $\prod'_{\wp \in \BB_X} \PGL_n(F_\wp)$, and this can be lifted to an element of $\prod'_{\wp \in \BB_X} \GL_n(F_\wp)$, representing an element of $\Cl_X(F,\GL_n)$.  Since $\Cl_X(F,\GL_n)$ is trivial, that class is also represented by the trivial element of $\prod'_{\wp \in \BB_X} \GL_n(F_\wp)$, and hence the given class in $\Cl_X(F,\PGL_n)$ is represented by the trivial element of $\prod'_{\wp \in \BB_X} \PGL_n(F_\wp)$.

\item \label{rational disconn gps}
If $G$ is a linear algebraic group over $F$ such that each connected component is a rational $F$-variety, then $\Sha_X(F,G)$ need not be trivial, but it is finite.  More explicitly, it is in bijection with $\Hom(\pi_1(\Gamma),G/G^0)/\!\!\sim$, where $\Gamma$ is the reduction graph of $\ms  X$, which encodes the intersections of the irreducible components of the closed fiber $X$ (see \cite[Section~6]{HHK:H1}), and where $\sim$ denotes post-conjugation by elements of the finite group $G/G^0$.  (Here $G^0$ is the identity component of $G$.)  Thus in this situation, $\Cl_X(F,G)$ is finite, and its order is computed by the above expression.

\item \label{G2 gps}
If $\cha(F) \ne 2$ and $G$ is a linear algebraic group of type $G_2$, then $\Sha_X(F,G)$ is trivial by \cite[Example~9.4]{HHK:H1} (corresponding to a local-global principle for octonion algebras).  Hence $\Cl_X(F,G)$ is trivial for such groups.

\item \label{BDE}
For each of the following types of linear algebraic groups $G$ over $F$, $\Sha_X(F,G)$ is trivial by \cite[Corollaries~4.3.2, 4.3.3]{HHK:Hi}, and hence so is  $\Cl_X(F,G)$: a quasi-split group of type $E_6$ or $E_7$; an almost simple group that is quasi-split of absolute rank at most $5$; an almost simple group that is quasi-split of type $B_6$ or $D_6$; an almost simple group that is split of type $D_7$; $\SL_1(A)$ where $A$ is a central simple $F$-algebra whose degree is square free and not divisible by $\cha(k)$.

\item \label{monotonic}
Suppose that $T=k[[t]]$ for some field $k$ of characteristic zero, and that $\XX$ is a normal crossings model whose closed fiber $X$ is reduced.  Suppose also that $G$ is defined as a linear algebraic group over $k$.  If the reduction graph associated to the closed fiber is a tree, and if this property is preserved upon finite extension of $k$, then $\Sha_X(F,G)$ is trivial by \cite[Theorem~4.11]{CHHKPS}.  Hence $\Cl_X(F,G)$ is also trivial.

\item \label{nc red cd}
If $\XX$ is a normal crossings model, and $G$ is a reductive group over $T$, then $\Sha_X(F,G) = \Sha(F,G) := \ker(H^1(F,G) \to \prod_v H^1(F_v,G))$, where $v$ ranges over the discrete valuations on $F$; see \cite[Theorem~3.2]{CHHKPS}.  In particular, this holds if $G$ is the semisimple group $\SL_1(D)$, where $D$ is a biquaternion division algebra over $k$, and $T=k[[t]]$.  Assume in addition that the closed fiber $X$ of $\XX$ consists of copies of $\P^1_k$ meeting at $k$-points.  If $\cd_2(k) \le 3$, then $\Sha(F,G)$ is trivial by \cite[Proposition~7.8]{CHHKPS}, and hence so is $\Cl_X(F,G) =\Sha_X(F,G)$.  
On the other hand, suppose we take $k=\QQ(\sqrt{17})((x))((y))$, of cohomological dimension equal to four.  Let $D=(-1,x)\otimes(2,y)$, and take $\XX = \operatorname{Proj}(T[u,v,w]/(uvw-t(u+v+w)^3))$.  Then $\Sha(F,G) = \Z/2\Z$ by \cite[Example 7.6]{CHHKPS}, and so $\Cl_X(F,G) = \Z/2\Z$.
Moreover, if we let $\tilde k/k$ be a suitable field extension (of infinite transcendence degree) and let $\tilde F$ be the base change of $F$ from $k((t))$ to $\tilde k((t))$, then $\Sha(\tilde F,G)$ is infinite by that same example.  Equivalently, $\Cl_X(\tilde F,G) = \Sha_X(\tilde F,G)$ is infinite, in contrast to the situation for global fields, where the class group is always finite (\cite[Theorem~5.1]{Borel}).  
\end{enumerate}
\end{ex}

In the case that the group $G$ is commutative, the higher cohomology groups $H^i$ are defined, for all $i \ge 0$.  In this situation, under a characteristic hypothesis, there is an associated long exact Mayer-Vietoris cohomology sequence that extends the one in 
\cite[Theorem~3.5]{HHK:H1}; see \cite[Theorem~3.1.3]{HHK:Hi}.  Using this, we obtain the following exact sequence of abelian groups (since $H^i$ with commutative coefficients is an abelian group):

\begin{thm} \label{les points}
Let $F$ be a semi-global field over a complete discrete valuation ring $T$ with fraction field $K$ and residue field $k$, and let $\XX$ be a normal model of $F$ over $T$, with closed fiber $X$.
Let $G$ be a commutative linear algebraic group over $F$, and assume that either $G$ has finite order not divisible by $\cha(k)$ or that $\cha(k)=0$.  Then there is the following long exact sequence of abelian groups:
\begin{equation} \label{les diagram}
\small{
\xymatrix{
1 \ar[r] & H^0(F,G) \ar[r] &
\prod_{P \in X_{(0)}} H^0(F_P,G) \times \prod_{\eta \in X_{(1)}} H^0(F_\eta^\h,G) \ar[r] & \prod'_{\wp \in \BB_X} H^0(F_\wp,G) \ar@<-2pt> `d[l]
`[lld] [lld] \\  
& H^1(F, G) \ar[r] & \prod_{P \in X_{(0)}} H^1(F_P,G) \times \prod_{\eta \in X_{(1)}} H^1(F_\eta^\h,G) \ar[r] & \prod'_{\wp \in \BB_X} H^1(F_\wp,G)\ar@<-2pt> `d[l]
`[lld] [lld] \\
& H^2(F, G) \ar[r] & {\ \ \cdots \hskip 2.5in}
\\
}}
\end{equation}  
\end{thm}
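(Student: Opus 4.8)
The plan is to rerun the proof of Theorem~\ref{6-term_sequence_points} with two substitutions: in place of the six-term sequence~(\ref{ses diagram patch}) I use the long exact Mayer--Vietoris sequence~(\ref{les diagram patch}), which is available here precisely because of the standing hypothesis on $G$ and $\cha(k)$ (\cite[Theorem~3.1.3]{HHK:Hi}); and in place of Lemma~\ref{expand ex seq}(\ref{enlarged seq sets}) I use its group-theoretic counterpart Lemma~\ref{expand ex seq}(\ref{enlarged seq groups}). Since $G$ is commutative, every term $H^i(F_\xi,G)$ is an abelian group and every structure map in~(\ref{les diagram patch}) is a homomorphism, so throughout we are working with exact sequences of abelian groups.

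First I would fix a non-empty finite set $\PP$ of closed points of $\XX$ containing every point at which $X$ fails to be unibranched, and form $\UU$, $\BB=\BB_\PP$, and $\PP'=X_{(0)}\smallsetminus\PP$ as in Theorem~\ref{6-term_sequence_points}. For every $i\ge 0$ set $A^i=\prod_{P\in\PP}H^i(F_P,G)\times\prod_{U\in\UU}H^i(F_U,G)$, $B^i=\prod_{\wp\in\BB}H^i(F_\wp,G)$, $C^i=\prod_{P\in\PP'}H^i(F_P,G)$, and $D^i=H^i(F,G)$, and let $\epsilon^i\colon A^i\to C^i$ be the projection onto the $\UU$-factor followed by the restrictions $F_U\hookrightarrow F_P$ for $P\in U$, exactly as before. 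I would then adjoin the summand $C^i$ to each $A^i$ and $B^i$ using the formulas of Lemma~\ref{expand ex seq}(\ref{enlarged seq groups}), namely $\alpha'^i(d)=(\alpha^i(d),\epsilon^i\alpha^i(d))$, $\beta'^i(a,c)=(\beta^i(a),\epsilon^i(a)^{-1}c)$, and $\delta'^i(b,c)=\delta^i(b)$, applied at \emph{every} index rather than only at $i=0,1$.

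The point is that the verification in Lemma~\ref{expand ex seq}(\ref{enlarged seq groups}) is purely local at each node: exactness of the enlarged sequence at $A^i\times C^i$, at $B^i\times C^i$, and at $D^{i+1}$ each follows from exactness of~(\ref{les diagram patch}) at the corresponding term together with the elementary computations in the proof of Lemma~\ref{expand ex seq}, and the adjoined summands $C^i$ never interact across different levels (each $C^i$ is created by $\alpha'^i$, carried by $\beta'^i$, and discarded by $\delta'^i$). Hence the construction applies verbatim level by level, yielding for the fixed $\PP$ a long exact sequence whose middle terms are $\prod_{P\in X_{(0)}}H^i(F_P,G)\times\prod_{U\in\UU}H^i(F_U,G)$ (combining the $\PP$- and $\PP'$-products into $X_{(0)}$) and $\prod_{\wp\in\BB}H^i(F_\wp,G)\times\prod_{P\in\PP'}H^i(F_P,G)$.

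Finally I would let $\PP$ range over the directed system of admissible finite sets and pass to the direct limit. Because filtered colimits of abelian groups are exact, the colimit of this direct system of long exact sequences is again a long exact sequence, and it remains only to identify its terms: $\lim_\to D^i=H^i(F,G)$ is constant, $\lim_\to$ of the first middle term is $\prod_{P\in X_{(0)}}H^i(F_P,G)\times\prod_{\eta\in X_{(1)}}H^i(F_\eta^\h,G)$, and $\lim_\to$ of the second middle term is $\prod'_{\wp\in\BB_X}H^i(F_\wp,G)$ by the definition of the restricted product. I expect the one point needing genuine care to be these limit identifications for $i\ge 2$: that $\lim_\to H^i(F_U,G)=H^i(F_\eta^\h,G)$ over the neighborhoods $U\in\UU_\eta$ shrinking to $\eta$ relies on the finite-presentation property of $H^i(-,G)$ recorded before Theorem~\ref{6-term_sequence_points} (continuity of Galois cohomology with torsion coefficients, valid under the hypothesis on $G$ and $\cha(k)$), together with the identification $F_\eta^\h=\bigcup_U F_U$ of \cite[Lemma~3.2.1]{HHK:Hi}. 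Everything else is a level-by-level repetition of the already-established six-term argument, so the asserted sequence~(\ref{les diagram}) follows.
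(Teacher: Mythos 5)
Your proposal is correct and takes essentially the same approach as the paper's proof: fix $\PP$, apply Lemma~\ref{expand ex seq}(\ref{enlarged seq groups}) to the long exact patching sequence (\ref{les diagram patch}) of \cite[Theorem~3.1.2]{HHK:Hi} --- the paper does this by applying the lemma to each six-term window $D^j \to A^j \to B^j \to D^{j+1} \to A^{j+1} \to B^{j+1}$ and splicing, which is the same as your level-by-level application with the formulas $\alpha'^i$, $\beta'^i$, $\delta'^i$ --- and then pass to the direct limit over $\PP$. The continuity point you flag for identifying $\lim_\to H^i(F_U,G)$ with $H^i(F_\eta^\h,G)$ when $i\ge 2$ is exactly what the paper relies on implicitly (as it already did in Theorem~\ref{6-term_sequence_points}), so nothing is missing.
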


\begin{proof}
We proceed analogously to the proof of Theorem~\ref{6-term_sequence_points}.  Let $\PP, \UU, \BB$, $\PP'$ be as in that proof.  By \cite[Theorem~3.1.2]{HHK:Hi}, we have a long exact sequence (\ref{les diagram patch}) of abelian groups.

Define $A^i,B^i,C^i,D^i$ and $\epsilon^i:A^i \to C^i$ as in the proof of Theorem~\ref{6-term_sequence_points}, but now considering all $i\ge 0$ (since $G$ is commutative).  For each $j \ge 0$, by applying Lemma~\ref{expand ex seq}(\ref{enlarged seq groups}) to the sequence 
\[D^j \to  A^j  \to  B^j \to \ D^{j+1} \to A^{j+1}  \to  B^{j+1},\]
we obtain an exact sequence of abelian groups 
\[D^j \to  A^j \times C^j \to  B^j  \times C^j \to \ D^{j+1} \to A^{j+1}  \times C^{j+1} \to  B^{j+1} \times C^{j+1}.\]
Splicing these together yields a long exact sequence 
\[\cdots D^j \to A^j \times C^j \to  B^j  \times C^j\ \to D^{j+1} \cdots,\]
which begins with $1 \to D^0$ since $D^0 \to A^0$ is injective.  Taking direct limits as before then yields the asserted long exact sequence of abelian groups.
\end{proof}

In the above situation, for $i \ge 0$ and $G$ commutative, write 
\[\Sha_X^i(F,G) := \ker\biggl(H^i(F,G) \to \prod_P H^i(F_P,G)\biggr),\]
where $P$ ranges over all the points of $X$ (including generic points),
and
\[\Cl^i_X(F,G) :=\!\!\!\prod_{\eta \in X_{(1)}}\!H^i(F_\eta^\h,G) \ \backslash\!{\prod_{\wp \in \BB_X}}\!\!' \ H^i(F_\wp,G) \ /\!\!\prod_{P \in X_{(0)}} H^i(F_P,G).\]
Thus $\Sha_X^0(F,G)=1$, $\Sha_X^1(F,G) = \Sha_X(F,G)$, and $\Cl^0_X(F,G) = \Cl_X(F,G)$.

\begin{cor} \label{double coset Hi}
For every $i \ge 0$, the coboundary map of the above exact sequence induces a group isomorphism
$\Cl^i_X(F,G) \to \Sha_X^{i+1}(F,G)$.
\end{cor}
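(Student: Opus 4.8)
The plan is to run the same argument as in Corollary~\ref{double coset H1}, using the long exact sequence~(\ref{les diagram}) of Theorem~\ref{les points} in place of the six-term sequence and carrying the abelian group structure throughout. Fix $i \ge 0$ and let $\delta\colon \prod'_{\wp \in \BB_X} H^i(F_\wp,G) \to H^{i+1}(F,G)$ be the coboundary map. By exactness of~(\ref{les diagram}) at $\prod'_{\wp} H^i(F_\wp,G)$, the kernel of $\delta$ is the image of the preceding map $m \colon \prod_{P \in X_{(0)}} H^i(F_P,G) \times \prod_{\eta \in X_{(1)}} H^i(F_\eta^\h,G) \to \prod'_{\wp} H^i(F_\wp,G)$. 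Because $G$ is commutative, Lemma~\ref{expand ex seq}(\ref{enlarged seq groups}) shows that $m$ is a homomorphism given (up to sign) by the difference of the two restriction maps to the branches; hence its image is the subgroup generated by the images of $\prod_{\eta} H^i(F_\eta^\h,G)$ and of $\prod_{P} H^i(F_P,G)$, which is exactly the subgroup by which one quotients $\prod'_{\wp} H^i(F_\wp,G)$ to form the abelian double coset group $\Cl^i_X(F,G)$. Thus $\delta$ induces an injective homomorphism $\Cl^i_X(F,G) = \cok(m) \hookrightarrow H^{i+1}(F,G)$.

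By exactness of~(\ref{les diagram}) at $H^{i+1}(F,G)$, the image of $\delta$ equals the kernel of the next map $H^{i+1}(F,G) \to \prod_{P} H^{i+1}(F_P,G) \times \prod_{\eta} H^{i+1}(F_\eta^\h,G)$. Hence $\delta$ induces a group isomorphism from $\Cl^i_X(F,G)$ onto this kernel, and it remains only to identify the latter with $\Sha^{i+1}_X(F,G)$. The two kernels impose the same conditions at the closed points $P \in X_{(0)}$; they differ only at the generic points $\eta \in X_{(1)}$, where the former requires triviality over $F_\eta^\h$ and the latter over $F_\eta$. So it suffices to show, for each $\eta$, that a class in $H^{i+1}(F,G)$ is trivial over $F_\eta^\h$ if and only if it is trivial over $F_\eta$.

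Since $F_\eta^\h \subseteq F_\eta$, one direction is immediate from functoriality of restriction. For the reverse direction I would use that $R_\eta^\h$ is a Henselian discrete valuation ring whose completion is $\wh R_\eta$, so that $F_\eta^\h$ is a Henselian valued field with completion $F_\eta$; the dense inclusion $F_\eta^\h \hookrightarrow F_\eta$ then induces an isomorphism of absolute Galois groups (via Krasner's lemma, every finite separable extension of $F_\eta$ is the completion of one of $F_\eta^\h$). Under the standing hypotheses on $G$, this yields that $H^{i+1}(F_\eta^\h,G) \to H^{i+1}(F_\eta,G)$ is injective, so a class trivial over $F_\eta$ is already trivial over $F_\eta^\h$. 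This identifies the kernel with $\Sha^{i+1}_X(F,G)$ and gives the desired isomorphism.

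I expect the cohomological comparison between $F_\eta^\h$ and $F_\eta$ in the last step to be the main obstacle, since it is the one place where genuinely higher-degree input is needed: for $i=0$ the paper invokes Proposition~5.8 of~\cite{HHK:H1}, which is specific to torsors, whereas for $i \ge 1$ one must argue at the level of Galois cohomology. When $G$ has finite order the Galois-group isomorphism gives the comparison directly; for general commutative $G$ in characteristic zero one reduces to the cases of tori and of finite coefficients, where the relevant cohomology of a Henselian discretely valued field and of its completion again coincide, being determined by the common residue field (the function field of the component through $\eta$) and Galois group. Everything else is a formal consequence of the exactness of~(\ref{les diagram}) and Lemma~\ref{expand ex seq}(\ref{enlarged seq groups}).
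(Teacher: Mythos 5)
Your formal skeleton is exactly the paper's: identify $\Cl^i_X(F,G)$ with the cokernel of the middle map in the long exact sequence (\ref{les diagram}), use exactness at $\prod'_{\wp \in \BB_X} H^i(F_\wp,G)$ and at $H^{i+1}(F,G)$ to get an isomorphism onto the kernel of the next restriction map, and reduce everything to the single nontrivial point that a class in $H^{i+1}(F,G)$ killed by $F_\eta$ is already killed by $F_\eta^\h$. The divergence is in how that point is settled. The paper's proof is a one-line reduction: it says the argument of Corollary~\ref{double coset H1} goes through verbatim with \cite[Proposition~3.2.2]{HHK:Hi} cited in place of \cite[Proposition~5.8]{HHK:H1} to pass from $F_\eta$ to $F_\eta^\h$. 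You instead sketch a direct comparison of $F_\eta^\h$ with its completion $F_\eta$. Your Krasner argument is sound as far as it goes: $F_\eta^\h$ is Henselian with completion $F_\eta$, so it is separably closed in $F_\eta$ and restriction identifies the absolute Galois groups; this genuinely settles the finite-order case, where the coefficient module $G((-)^{\sep})$ is the same for both fields. But for the torus case (the essential one when $\cha(k)=0$) your clause that the cohomology is ``determined by the common residue field and Galois group'' is not yet an argument: the Galois modules $T((F_\eta^\h)^{\sep})$ and $T(F_\eta^{\sep})$ are genuinely different, so the Galois-group isomorphism alone transfers nothing. The standard way to complete your route is a Kummer d\'evissage: $H^j(E,T)$ is torsion for $j\ge 1$; the map $T(F_\eta^\h)/n \to T(F_\eta)/n$ is bijective (surjective by density of $F_\eta^\h$ in $F_\eta$ together with smoothness, hence openness of the image, of the $n$-power map; injective because the fiber over a point is finite \'etale and $F_\eta^\h$ is separably closed in $F_\eta$); then induct on $j$ using $0 \to H^{j-1}(E,T)/n \to H^j(E,T[n]) \to H^j(E,T)[n] \to 0$ and the finite-coefficient isomorphism you already have, and finally pass from tori to general commutative $G$ via the vanishing of the cohomology of the unipotent part in characteristic zero and d\'evissage through multiplicative type. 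With that (standard, but currently missing) completion your proof is correct; alternatively, quoting \cite[Proposition~3.2.2]{HHK:Hi} as the paper does closes the step immediately. What your version buys is self-containedness and an explicit mechanism (Galois groups of a Henselian field and its completion agree); what the paper's version buys is brevity and the fact that the cited proposition was engineered for precisely these hypotheses.
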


\begin{proof}
This follows from Theorem~\ref{les points} in the same way that Corollary~\ref{double coset H1} followed from Theorem~\ref{6-term_sequence_points}, except that \cite[Proposition~3.2.2]{HHK:Hi} is used in place of \cite[Proposition~5.8]{HHK:H1} to pass from $F_\eta$ to $F^\h_\eta$.
\end{proof}

Under appropriate hypotheses, the coboundary maps in the long exact sequence in \cite[Theorem~3.1.3]{HHK:Hi} are trivial, and so the sequence splits, yielding the triviality of $\Cl^i_X(F,G)$.  Namely, we have the examples given in the following two corollaries, where $\Z/m\Z(r) := \mu_m^{\otimes r}$ for $m$ not divisible by the residue characteristic:

\begin{cor} \label{twisted cyclic sequence}
With $F$ as in Theorem~\ref{les points} and $i \ge 1$, let $G = \Z/m\Z(r)$ for some integers $m,r$ with $m>0$ not divisible by $\cha(k)$, such that either $r=i$ or $[F(\mu_m):F]$ is prime to $m$.
Then the coboundary map $\prod'_{\wp \in \BB_X} H^i(F_\wp,G) \to H^{i+1}(F,G)$ in Theorem~\ref{les points} is trivial and $\Cl^i_X(F,G)$ is trivial.
\end{cor}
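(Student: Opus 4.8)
The plan is to reduce the statement to the analogous triviality in the patching context, where it is available from \cite{HHK:Hi}, and then to pass to the direct limit. Concretely, I would recall from the proof of Theorem~\ref{les points} that the sequence (\ref{les diagram}) is obtained as the direct limit, over finite sets $\PP$ of closed points, of the sequences produced by applying Lemma~\ref{expand ex seq}(\ref{enlarged seq groups}) to the patching Mayer--Vietoris sequence (\ref{les diagram patch}) of \cite[Theorem~3.1.2]{HHK:Hi}. The essential observation is that this procedure leaves the connecting maps unchanged: by the formula $\delta'^0(b,c)=\delta^0(b)$ of Lemma~\ref{expand ex seq}, each expanded coboundary $B^i\times C^i\to D^{i+1}$ is just the patching coboundary $B^i\to D^{i+1}$ precomposed with the projection forgetting the $C^i$-factor. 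Consequently the coboundary map $\prod'_{\wp\in\BB_X}H^i(F_\wp,G)\to H^{i+1}(F,G)$ in (\ref{les diagram}) is the direct limit of the patching coboundary maps $\prod_{\wp\in\BB}H^i(F_\wp,G)\to H^{i+1}(F,G)$ over all $\PP$.

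Next I would invoke the triviality of these patching coboundaries. Under the stated hypotheses on $G=\Z/m\Z(r)$ (that $r=i$, or that $[F(\mu_m):F]$ is prime to $m$), the local-global principle for $H^{i+1}$ holds in the patching context, which by exactness of (\ref{les diagram patch}) is equivalent to the triviality of the coboundary $\prod_{\wp\in\BB}H^i(F_\wp,G)\to H^{i+1}(F,G)$ for each $\PP$; this is the substantive input, supplied by the results for twisted cyclic coefficients in \cite{HHK:Hi} (cf.\ \cite[Theorem~3.1.3]{HHK:Hi}). Since a direct limit of trivial maps is trivial, the coboundary map of Theorem~\ref{les points} is trivial, which is the first assertion.

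Finally, for the triviality of $\Cl^i_X(F,G)$ I would combine this with Corollary~\ref{double coset Hi}. That corollary states that the coboundary induces a group isomorphism $\Cl^i_X(F,G)\iso\Sha_X^{i+1}(F,G)$, where $\Sha_X^{i+1}(F,G)$ is the image of the coboundary map (by exactness of (\ref{les diagram}) and the identification of the kernel of the subsequent map with $\Sha_X^{i+1}$, as in Corollary~\ref{double coset H1}). Since the coboundary is trivial, this image is trivial, and hence $\Cl^i_X(F,G)$ is trivial as well. The only genuinely mathematical content lies in the cited patching input, so that is the main obstacle; the remaining steps are a direct-limit and exactness bookkeeping, the one point requiring care being the verification that Lemma~\ref{expand ex seq}(\ref{enlarged seq groups}) preserves the connecting maps, which is immediate from the explicit formulas there.
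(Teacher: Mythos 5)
Your proposal is correct in substance but takes a genuinely different route from the paper, and the one thing that needs repair is your citation for the substantive input. The paper works directly at the level of the limit sequence: it quotes \cite[Theorem~3.2.3(i)]{HHK:Hi} for the injectivity of $H^{i+1}(F,G) \to \prod_{P \in X} H^{i+1}(F_P,G)$ under the stated hypotheses on $m$ and $r$, concludes at once that $\Sha_X^{i+1}(F,G)$ and hence $\Cl^i_X(F,G)$ are trivial via Corollary~\ref{double coset Hi}, and then deduces triviality of the coboundary by observing that this injection factors through the middle term $\alpha_{i+1}: H^{i+1}(F,G) \to \prod_{P \in X_{(0)}} H^{i+1}(F_P,G) \times \prod_{\eta \in X_{(1)}} H^{i+1}(F_\eta^\h,G)$ of (\ref{les diagram}) (because $F_\eta^\h \subseteq F_\eta$), so that $\alpha_{i+1}$ is injective and exactness kills the coboundary. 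You instead argue at the finite patching level and pass to the direct limit; your key bookkeeping point --- that Lemma~\ref{expand ex seq} leaves the connecting maps unchanged, $\delta'^0(b,c)=\delta^0(b)$, so the adelic coboundary is the colimit of the patching coboundaries --- is correct, and your order of deduction (coboundary trivial first, then $\Sha_X^{i+1}$ and $\Cl^i_X$) is the reverse of the paper's but equally valid. However, the input you need at finite level --- triviality of $\ker\bigl(H^{i+1}(F,G)\to\prod_{\xi\in\PP\cup\UU}H^{i+1}(F_\xi,G)\bigr)$ for each $\PP$ --- is not supplied by \cite[Theorem~3.1.3]{HHK:Hi}, which is just the long exact Mayer--Vietoris sequence and carries no vanishing content; the relevant result is \cite[Theorem~3.2.3(i)]{HHK:Hi}, stated with respect to all points of $X$. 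The patch-level triviality does follow from it, since for a normal model the finite-level kernel is contained in $\Sha_X^{i+1}(F,G)$ (triviality over $F_U$ implies triviality over $F_P$ for every closed point $P \in U$ and over $F_\eta$ for the generic point $\eta \in U$, as $F_U \subseteq F_P, F_\eta$) --- the higher-degree analog of the containment $\Sha_\PP(F,G) \subseteq \Sha_X(F,G)$ recalled in Section~\ref{sect: background} --- but this inclusion needs to be stated explicitly rather than assumed. With that citation corrected and the inclusion supplied, your argument is complete; what the paper's route buys is that it bypasses the direct-limit bookkeeping entirely, at the cost of the small factoring observation through $F_\eta^\h$, which your route avoids.
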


\begin{proof}
Under the above hypotheses, the map $H^{i+1}(F,G) \to \prod_{P \in X} H^{i+1}(F_P,G)$ is injective by \cite[Theorem~3.2.3(i)]{HHK:Hi}.  That is, $\Sha_X^{i+1}(F,G)$ is trivial; and then so is $\Cl_X^i(F,G)$ by Corollary~\ref{double coset Hi}.  The above injection factors through
\[\alpha_{i+1}:H^{i+1}(F, G) \to \prod_{P \in X_{(0)}} H^{i+1}(F_P,G) \times \prod_{\eta \in X_{(1)}} H^{i+1}(F_\eta^\h,G),\] 
since $H^{i+1}(F, G) \to H^{i+1}(F_\eta,G)$ factors through $H^{i+1}(F,G) \to H^{i+1}(F_\eta^\h,G)$ for each $\eta \in X_{(1)}$.  Thus 
$\alpha_{i+1}$ is also injective; and so the coboundary map $\prod'_{\wp \in \BB_X} H^i(F_\wp,G) \to H^{i+1}(F,G)$ is trivial, by Theorem~\ref{les points}.
\end{proof}

\begin{cor} \label{Gm sequence}
With $F$ as in Theorem~\ref{les points}, let $G = \Gm$.  Then the coboundary maps $\prod'_{\wp \in \BB_X} H^i(F_\wp,G) \to H^{i+1}(F,G)$ in Theorem~\ref{les points} are trivial for $i=0,1$, and $\Cl^i_X(F,G)$ is trivial.  The same holds for all $i>1$ provided that $\cha(k)=0$ and $K$ contains a primitive $m$-th root of unity for all $m \ge 1$.
\end{cor}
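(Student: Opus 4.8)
The plan is to reduce everything, for each $i$, to the vanishing of $\Sha_X^{i+1}(F,\Gm)$, exactly as in the proof of Corollary~\ref{twisted cyclic sequence}. By Corollary~\ref{double coset Hi}, the coboundary map $\prod'_{\wp\in\BB_X}H^i(F_\wp,\Gm)\to H^{i+1}(F,\Gm)$ of Theorem~\ref{les points} is trivial, and $\Cl^i_X(F,\Gm)$ is trivial, if and only if
\[\alpha_{i+1}\colon H^{i+1}(F,\Gm)\to\prod_{P\in X_{(0)}}H^{i+1}(F_P,\Gm)\times\prod_{\eta\in X_{(1)}}H^{i+1}(F_\eta^\h,\Gm)\]
is injective. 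Since $H^{i+1}(F,\Gm)\to H^{i+1}(F_\eta,\Gm)$ factors through $H^{i+1}(F_\eta^\h,\Gm)$, this injectivity is equivalent to the triviality of $\Sha_X^{i+1}(F,\Gm)$, so I would establish the latter in each range of $i$.

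For $i=0$ there is nothing to do: $\Sha_X^1(F,\Gm)\subseteq H^1(F,\Gm)=1$ by Hilbert~90, and the coboundary lands in the trivial group $H^1(F,\Gm)$. For $i=1$ the target is $\Sha_X^2(F,\Gm)=\ker(\Br(F)\to\prod_P\Br(F_P))$, and I would exploit the Kummer sequence $1\to\mu_m\to\Gm\xrightarrow{m}\Gm\to1$, available since $\cha(k)=0$. A class $\alpha\in\Sha_X^2(F,\Gm)$ is torsion of some order $m$, so it lifts to $\tilde\alpha\in H^2(F,\mu_m)$; and because $H^1(F_P,\Gm)=1$ for every $P$, the Kummer map $H^2(F_P,\mu_m)\hookrightarrow\Br(F_P)$ is injective, so the local triviality of $\alpha$ forces $\tilde\alpha\in\Sha_X^2(F,\mu_m)$. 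As $\mu_m=\Z/m\Z(1)$ and the twist $r=1$ here equals $i=1$, the injectivity of $H^2(F,\mu_m)\to\prod_P H^2(F_P,\mu_m)$ from Theorem~3.2.3(i) of \cite{HHK:Hi} (as used in Corollary~\ref{twisted cyclic sequence}) gives $\tilde\alpha=1$, hence $\alpha=1$. It is precisely the coincidence $r=i$ that makes a roots-of-unity hypothesis unnecessary here.

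For $i>1$ a single Kummer lift still has twist $r=1\ne i$, so instead I would trade $\Gm$ for roots of unity globally. Since $\cha(F)=0$, the module $\ov{F}^{\,\times}$ is divisible with torsion subgroup $\mu_\infty=\QQ/\Z(1)$, and the quotient $\ov{F}^{\,\times}/\mu_\infty$ is a $\QQ$-vector space, whose Galois cohomology vanishes in every positive degree (being at once torsion and uniquely divisible); the same holds over each $F_P$. Hence $H^{j}(F,\Gm)\cong H^{j}(F,\mu_\infty)$ and $H^{j}(F_P,\Gm)\cong H^{j}(F_P,\mu_\infty)$ compatibly for all $j\ge2$, so $\Sha_X^{i+1}(F,\Gm)\cong\Sha_X^{i+1}(F,\mu_\infty)$. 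The hypothesis that $K$ contains all roots of unity makes $\mu_m\cong\Z/m\Z$ for every $m$, so $\mu_\infty\cong\QQ/\Z$ and Theorem~3.2.3(i) applies for every modulus through its root-of-unity branch, yielding $\Sha_X^{i+1}(F,\mu_m)=1$ for all $m$. Writing $\mu_\infty=\varinjlim_m\mu_m$, it then remains to assemble these finite-level vanishings into $\Sha_X^{i+1}(F,\mu_\infty)=1$.

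I expect this last step to be the main obstacle, since $\Sha_X$ is built from the full product $\prod_P$, which does not commute with the direct limit $\varinjlim_m$: a priori there could be a ``phantom'' class, nontrivial in $H^{i+1}(F,\mu_\infty)$ but represented at each $P$ by a finite-level class that becomes a coboundary only after enlarging the modulus by a $P$-dependent amount. To exclude it I would show that an $m$-torsion class of $\Sha_X^{i+1}(F,\mu_\infty)$ is already detected at the fixed level $m$ — equivalently, that on the relevant classes the transition $H^{i+1}(F_P,\mu_m)\to H^{i+1}(F_P,\mu_\infty)$ is injective — so that a preimage $\alpha_m$ lies in $\Sha_X^{i+1}(F,\mu_m)=1$ and the class vanishes. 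Controlling these transitions via the Bockstein maps attached to $1\to\mu_m\to\mu_\infty\xrightarrow{m}\mu_\infty\to1$ and the structure of $H^i(F_P,\Gm)$ is where the real work lies, and is exactly what distinguishes the conditional range $i>1$ from the clean cases $i=0,1$.
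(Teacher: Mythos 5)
Your reduction of each case to the vanishing of $\Sha_X^{i+1}(F,\Gm)$ is sound and follows the same template as the paper's Corollary~\ref{twisted cyclic sequence} (one small caution: the ``if and only if'' in your opening needs \cite[Proposition~3.2.2]{HHK:Hi} for the direction you don't use, since the factoring through $F_\eta^\h$ only gives $\ker\alpha_{i+1}\subseteq\Sha_X^{i+1}$; but that inclusion is the direction your argument requires, so no harm is done). Your $i=0$ case via Hilbert~90 is essentially the paper's. For $i=1$ you take a genuinely different route: the paper instead observes that $\Cl_X(F,\PGL_n)$ is trivial by Example~\ref{Sigma ex}(\ref{rational conn gps}), feeds this into Theorem~\ref{6-term_sequence_points} for $G=\PGL_n$, and translates, via the identification of $H^1(-,\PGL_n)$ with degree-$n$ central simple algebras, into triviality of the kernel of $\Br(F)\to\prod_{P}\Br(F_P)\times\prod_{\eta}\Br(F_\eta^\h)$, which is exactly the relevant map from $H^2(F,\Gm)$ in (\ref{les diagram}). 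Your Kummer-sequence argument is a correct alternative: Brauer groups of fields are torsion, $H^1(E,\Gm)=1$ for every field $E$ makes $H^2(E,\mu_m)\to\Br(E)$ injective, and the coincidence $r=i=1$ places you in the hypotheses of \cite[Theorem~3.2.3(i)]{HHK:Hi} (the condition that $m$ be prime to $\cha(k)$ is automatic, since Theorem~\ref{les points} applied to $G=\Gm$ already forces $\cha(k)=0$). The paper's route avoids Kummer theory entirely, at the cost of invoking the rationality of $\PGL_n$; the two are of comparable strength here.

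For $i>1$, however, the gap you flag is genuine, and the missing idea is simpler than what you propose: the paper does not reassemble the $\Gm$-statement from finite coefficient levels at all. It quotes \cite[Theorem~3.2.3(ii)]{HHK:Hi}, whose part~(ii) \emph{directly} asserts the injectivity of $H^{i+1}(F,\Gm)\to\prod_{P\in X}H^{i+1}(F_P,\Gm)$ under precisely the hypotheses $\cha(k)=0$ and all roots of unity in $K$ --- that is the entire reason those hypotheses appear in the statement of the corollary. You used only part~(i) of that theorem and then tried to pass from $\mu_m$ to $\mu_\infty$ by hand, which is where the argument stalls. Moreover, your proposed repair --- injectivity of the transitions $H^{i+1}(F_P,\mu_m)\to H^{i+1}(F_P,\mu_\infty)$ ``on the relevant classes'' --- should not be expected to work: with all roots of unity present, the kernel of this transition is the image of the connecting map from $H^i(F_P,\QQ/\Z)$, i.e.\ $H^i(F_P,\QQ/\Z)/m\cong H^{i+1}(F_P,\Z)/m$, and such mod-$m$ quotients are nonzero in general (already $H^1(F_P,\QQ/\Z)/m=\Hom(G_{F_P}^{\mathrm{ab}},\QQ/\Z)/m$ need not vanish). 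The global lift $\tilde\alpha_m$ can be altered only by a single global choice from $H^i(F,\mu_\infty)$, while the required level of local trivialization is unbounded over the infinitely many $P$, so a pointwise injectivity argument cannot close the loop.

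If you do want a proof rather than a citation, the viable assembly within the paper's framework is to retreat to a finite patch level: for a finite set $\PP$ as in Section~\ref{sect: background}, the obstruction $\Sha_\PP^{i+1}$ involves only finitely many overfields $F_P,F_U$, so the finite product commutes with $\varinjlim_m$, and the finite-level vanishing $\Sha_\PP^{i+1}(F,\mu_m)=1$ for all $m$ (from part~(i), using that twists are irrelevant once all roots of unity lie in $K$) passes to $\mu_\infty$ and hence, via your divisibility comparison $H^{j}(F,\Gm)\cong H^{j}(F,\mu_\infty)$ for $j\ge2$, to $\Gm$; one then exhausts $\Sha_X^{i+1}$ by the $\Sha_\PP^{i+1}$ using \cite[Proposition~3.2.2]{HHK:Hi} to replace triviality over $F_\eta$ by triviality over some $F_U$. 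In effect this reproves the cited \cite[Theorem~3.2.3(ii)]{HHK:Hi}; as written, your $i>1$ case is incomplete.
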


\begin{proof}
In the case $i=0$, Example~\ref{Sigma ex}(\ref{rational conn gps}) 
showed that $\Cl_X(F,\Gm) = \Cl^0_X(F,\Gm)$ is trivial.  Hence so is the associated coboundary map, by the exactness of (\ref{les diagram}).

For the case $i=1$, first note that $\Cl_X(F,\PGL_n)$ is also trivial by Example~\ref{Sigma ex}(\ref{rational conn gps}).  Thus by Theorem~\ref{6-term_sequence_points} in the case $G=\PGL_n$, the coboundary map is trivial and so the map \[H^1(F,\PGL_n) \to \prod_{P \in X_{(0)}} H^1(F_P,\PGL_n) \times\!\prod_{\eta \in X_{(1)}} H^1(F_\eta^\h,\PGL_n)\]
has trivial kernel.  But for any field $E$, the pointed set $H^1(E,\PGL_n)$ classifies isomorphism classes of central simple $E$-algebras of degree $n$, with the distinguished element corresponding to the split algebra ${\rm M}_n(E)$.  Thus the map $\Br(F) \to \prod_{P \in X_{(0)}} \Br(F_P) \times \prod_{\eta \in X_{(1)}} \Br(F_\eta^\h)$ has trivial kernel.  But this is just the map from $H^2(F,\Gm)$ in the exact sequence (\ref{les diagram}) with $G=\Gm$.  Hence the coboundary map to $H^2(F,\Gm)$ is trivial, and thus so is $\Cl^1_X(F,\Gm)$.

Finally, for the case $i>1$, the map $H^{i+1}(F,\Gm) \to \prod_{P \in X} H^{i+1}(F_P,\Gm)$ is injective by \cite[Theorem~3.2.3(ii)]{HHK:Hi}, under the given additional hypotheses.  The remainder of the proof of the corollary is then the same as in the proof of Corollary~\ref{twisted cyclic sequence}.
\end{proof}

\section{Adelic double cosets for relative projective curves} \label{sect: ring Cl}

The previous section considered a linear algebraic group $G$ defined over a semi-global field $F$, and worked with the Galois cohomology of $G$ over extension fields of $F$ that arose from looking locally on a model $\XX$ of $F$.  Below we consider an affine group scheme over $\XX$, and work with \'etale cohomology with respect to completions of rings associated to the model.  Again we obtain a Mayer-Vietoris sequence, which gives rise to an identification of a local-global obstruction to an adelic double coset space (now defined in terms of rings).
This gives rise to a closer analog of the classical adelic double coset space, classifying locally trivial torsors over $\XX$ rather than over $F$.  Whereas Section~\ref{sect: sgf Cl} relied on results about patching over fields (from \cite{HHK:H1} and \cite{HHK:Hi}), here we draw on patching results for torsors over schemes that were proven from a Tannakian point of view in \cite{HKL}.

To emphasize the parallel with the classical case, we begin with the ``toy model'' of a connected (but possibly reducible) one-dimensional scheme $X$, which we can think of, in the projective case, as the closed fiber of a model of a semi-global field.  The scheme $X$, however, is not required to be a projective curve, and in fact it can be the spectrum of the ring of integers of a number field.  In that situation we recover the classical case as considered by Borel (\cite{Borel}).

Below, all our affine group schemes are required to be smooth and of finite type.

As indicated above, we first consider a connected reduced one-dimensional Noetherian scheme $X$, and let $X_{(0)}, X_{(1)}$ denote the set of points of dimension $0,1$ respectively.  For each $P \in X_{(0)}$, let $\wh \OO_P$ be the completion of the local ring $\OO_P$ of $X$ at $P$; this is a complete discrete valuation ring if $X$ is regular at $P$.  For $P \in X_{(0)}$, let
the {\em branches} at $P$ be the minimal primes $\wp$ in $\wh \OO_P$.  For each branch $\wp$, let $\KK_\wp$ be the localization of $\wh \OO_P$ at $\wp$; this is the fraction field of $\wh \OO_P$ if $X$ is regular at $P$.  
If $S \subseteq X$, let $\BB_S$ be the set of branches at closed points in~$S$; in particular, $\BB_X$ is 
the set of all branches at points of $X_{(0)}$.  

If $\eta \in X_{(1)}$, then the closure of $\eta$ is an irreducible component $X_0$ of $X$; and we let $\KK_\eta$ be the function field of $X_0$.  Thus $\KK_\eta$ is the residue field at $\eta$, and it is the union of the coordinate rings $\OO(U)$, where $U$ ranges over the nonempty affine open subsets of $X_0$.
For $i=0,1$, consider the restricted product $\displaystyle {\prod_{\wp \in \BB_X}}\!\!' \,H^i(\KK_\wp,G): = \lim_\to\, \bigl(\!\prod_{\wp \in \BB_S} \!H^i(\KK_\wp,G) \times\!\prod_{\wp \not\in S} \!H^i(\wh \OO_P,G)\bigr)$, where $S$ ranges over finite subsets of $X_{(0)}$.

\begin{thm} \label{6 term 1 dim}
Let $X$ be a connected reduced one-dimensional Noetherian scheme, and let $G$ be a smooth affine group scheme over $X$.  Then there is an exact sequence in \'etale cohomology
\[\xymatrix{
1 \ar[r] & H^0(X,G) \ar[r] &
\prod_{P \in X_{(0)}} H^0(\wh \OO_P,G) \times \prod_{\eta \in X_{(1)}} H^0(\KK_\eta,G) \ar[r] & \prod'_{\wp \in \BB_X} H^0(\KK_\wp,G) \ar@<-2pt> `d[l]
`[lld] [lld] \\  
& H^1(X, G) \ar[r] & \prod_{P \in X_{(0)}} H^1(\wh \OO_P,G) \times \prod_{\eta \in X_{(1)}} H^1(\KK_\eta,G) \ar@<.5ex>[r] \ar@<-.5ex>[r] & \prod'_{\wp \in \BB_X} H^1(\KK_\wp,G).
}\]  
\end{thm}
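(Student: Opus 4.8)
The plan is to mirror the proof of Theorem~\ref{6-term_sequence_points} almost line for line, replacing the field-patching input of \cite{HHK:H1} by the Tannakian torsor-patching results over schemes established in \cite{HKL}. Fix a non-empty finite set $\PP \subseteq X_{(0)}$ containing every closed point at which $X$ fails to be unibranched (in particular every point lying on two distinct irreducible components). Let $\UU$ be the set of connected components of $X \smallsetminus \PP$ and let $\BB = \BB_\PP$ be the set of branches of $X$ at the points of $\PP$. Enlarging $\PP$ if necessary, each $U \in \UU$ lies in a single irreducible component $X_0$ of $X$ and is irreducible (and affine in the cases of principal interest, such as projective curves or localizations of Dedekind schemes), so that $\OO(U)$ is a domain with fraction field $\KK_\eta$ for the corresponding $\eta \in X_{(1)}$. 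The first step is to produce, for each such $\PP$, a finite six-term Mayer--Vietoris sequence parallel to~(\ref{ses diagram patch}): it has $H^\bullet(X,G)$ in place of $H^\bullet(F,G)$, the product $\prod_{P\in\PP} H^\bullet(\wh\OO_P,G)\times\prod_{U\in\UU}H^\bullet(\OO(U),G)$ in the middle, and $\prod_{\wp\in\BB}H^\bullet(\KK_\wp,G)$ on the right, for $\bullet = 0,1$, with a doubled arrow on $H^1$ and a coboundary $\delta^0$.

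To obtain this finite sequence, the idea is formal glueing (Beauville--Laszlo type) at the points of $\PP$: the scheme $X$ is recovered from its open part $X \smallsetminus \PP = \coprod_{U\in\UU} U$ together with the formal neighborhoods $\Spec\wh\OO_P$ (for $P\in\PP$), glued along the punctured formal neighborhoods $\Spec\wh\OO_P \smallsetminus \{P\} = \coprod_\wp \Spec\KK_\wp$, indexed by the branches $\wp$ at $P$. The Tannakian patching of \cite{HKL} promotes this to an equivalence between $G$-torsors on $X$ and compatible families of $G$-torsors on the pieces agreeing over the $\KK_\wp$, and the associated six-term exact sequence of pointed sets then follows by the same formalism by which \cite[Theorem~3.5]{HHK:H1} was deduced from field patching. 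I expect this to be the main obstacle: since $X$ is only assumed reduced (not regular), the branches are the minimal primes of $\wh\OO_P$ and $\KK_\wp$ is the localization of $\wh\OO_P$ there, so one must confirm that the hypotheses of \cite{HKL} cover this (possibly singular) generality. Essentially all of the geometric content of the theorem lives in this finite-level statement.

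With the finite sequences in hand, the argument becomes purely formal. Apply Lemma~\ref{expand ex seq}(\ref{enlarged seq sets}) with $D^i=H^i(X,G)$, with $A^i = \prod_{P\in\PP}H^i(\wh\OO_P,G)\times\prod_{U\in\UU}H^i(\OO(U),G)$, with $B^i=\prod_{\wp\in\BB}H^i(\KK_\wp,G)$, and with $C^i=\prod_{P\in\PP'}H^i(\wh\OO_P,G)$, where $\PP'=X_{(0)}\smallsetminus\PP$ and $\epsilon^i$ is induced by the inclusions $\OO(U)\hookrightarrow\wh\OO_P$ for the unique $U$ containing a given $P\in\PP'$. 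Then let $\PP$ range over the directed system of admissible finite sets and pass to the direct limit, exactly as in Theorem~\ref{6-term_sequence_points}. The terms $H^0(X,G)$ and $H^1(X,G)$ are constant in $\PP$; the limit $\lim_\to\bigl(\prod_{\wp\in\BB}H^i(\KK_\wp,G)\times\prod_{P\in\PP'}H^i(\wh\OO_P,G)\bigr)$ is the restricted product $\prod'_{\wp\in\BB_X}H^i(\KK_\wp,G)$ by definition; and $\lim_\to \prod_{U\in\UU}H^i(\OO(U),G) = \prod_{\eta\in X_{(1)}}H^i(\KK_\eta,G)$. Since filtered direct limits preserve exactness of sequences of pointed sets, the asserted sequence follows, and the initial $1$ records the injectivity of $H^0(X,G)\to\prod_P H^0(\wh\OO_P,G)$.

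The one remaining point to justify is the generic-fiber identification $\lim_\to \prod_{U\in\UU}H^i(\OO(U),G) = \prod_{\eta}H^i(\KK_\eta,G)$. As $\PP$ grows, the opens $U$ contained in a fixed component $X_0$ form a neighborhood basis of its generic point $\eta$, and affine opens are cofinal, so
\[ \KK_\eta = \bigcup_{V\subseteq X_0}\OO(V) = \lim_\to \OO(V). \]
Because $G$ is smooth affine of finite type, hence of finite presentation, \'etale cohomology commutes with this filtered colimit of rings (continuity), giving $H^i(\KK_\eta,G)=\lim_\to H^i(\OO(V),G)$. This is precisely the step at which the full function field $\KK_\eta$ appears, in contrast with the Henselian field $F_\eta^\h$ of Theorem~\ref{6-term_sequence_points}: there the open pieces are first completed $t$-adically, so their union is only a Henselization, whereas here the $\OO(U)$ are uncompleted and their union is already the function field of the component.
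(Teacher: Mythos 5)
Your proposal is correct and follows essentially the same route as the paper: a finite-level six-term Mayer--Vietoris sequence for a set $\PP$ of closed points (with middle term $\prod_{P\in\PP}H^i(\wh\OO_P,G)\times\prod_{U\in\UU}H^i(\OO(U),G)$), expanded via Lemma~\ref{expand ex seq}(\ref{enlarged seq sets}) with $C^i=\prod_{P\in\PP'}H^i(\wh\OO_P,G)$ and then passed to the direct limit over $\PP$, using continuity of $H^i(-,G)$ to identify $\lim_\to\prod_U H^i(\OO(U),G)$ with $\prod_\eta H^i(\KK_\eta,G)$. The one step you flag as unverified --- that the Tannakian patching of \cite{HKL} applies at possibly singular points of a merely reduced $X$ --- is exactly what the paper's Proposition~\ref{6 term 1 dim patch} supplies: coherent-sheaf patching with respect to $\wh\PP$, $X\smallsetminus\PP$, and $\Spec(\prod_{\wp\in\BB}\KK_\wp)$ holds by \cite[Proposition~4.2]{FR} or \cite[Theorem~2.6]{artinalg} in the affine case and by \cite[Theorem~3.2]{Pr} in general, whereupon \cite[Corollary~3.0.2]{HKL} yields the finite-level exact sequence, so your flagged hypothesis check does go through (with $X$ excellent, which is where the paper also settles the affineness of the components $U$ that you hedged on).
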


This follows from the following analogous assertion with respect to a finite set $\PP$ of closed points, via Lemma~\ref{expand ex seq}(\ref{enlarged seq sets}), in the same way that the exact sequence (\ref{ses diagram}) in Theorem~\ref{6-term_sequence_points} followed from the exact sequence (\ref{ses diagram patch}).

\begin{prop} \label{6 term 1 dim patch}
Let $X$ be a connected reduced one-dimensional excellent scheme, and let $G$ be a smooth affine group scheme over $X$.  
Let $\PP$ be a nonempty subset of $X_{(0)}$ that contains all the points at which $X$ is not regular.  Let $\UU$ be the set of connected components of the complement $X \smallsetminus \PP$; each of these is affine.  Let $\BB$ be the set of branches of $X$ at the points of $\PP$.  Then there is a functorial exact sequence in \'etale cohomology 
\[\xymatrix{
1 \ar[r] & H^0(X,G) \ar[r] &
\prod_{P \in \PP} H^0(\wh \OO_P,G) \times \prod_{U \in \UU} H^0(\OO(U),G) \ar[r] & \prod_{\wp \in \BB} H^0(\KK_\wp,G) \ar@<-2pt> `d[l]
`[lld] [lld] \\  
& H^1(X, G) \ar[r] & \prod_{P \in \PP} H^1(\wh \OO_P,G) \times \prod_{U \in \UU} H^1(\OO(U),G) \ar@<.5ex>[r] \ar@<-.5ex>[r] & \prod_{\wp \in \BB} H^1(\KK_\wp,G).
}\]  
\end{prop}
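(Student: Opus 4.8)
The plan is to deduce this six-term exact sequence from a glueing (patching) equivalence for $G$-torsors, in exactly the way that the exact sequence (\ref{ses diagram patch}) was obtained over fields in \cite[Theorem~3.5]{HHK:H1}. The essential input is the Tannakian patching result of \cite{HKL}. First I would observe that the data $(\Spec \wh\OO_P)_{P\in\PP}$, $(U)_{U\in\UU}$, and $(\Spec\KK_\wp)_{\wp\in\BB}$ assemble into a patching setup for $X$: since $\PP$ contains every point at which $X$ fails to be regular, each connected component of $X\smallsetminus\PP$ is an affine $U$ that is regular, while the punctured formal neighborhood $\Spec\wh\OO_P\smallsetminus\{P\}$ of each $P\in\PP$ is the disjoint union of the $\Spec\KK_\wp$ over the branches $\wp$ at $P$, each of which maps into the unique component $U\in\UU$ adjacent to $P$ along that branch. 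Thus $X$ is covered by the formal neighborhoods $\Spec\wh\OO_P$ together with the opens $U$, overlapping precisely along the $\Spec\KK_\wp$. I would then check that this is a valid patching problem in the sense of \cite{HKL}, so that the restriction functor from $G$-torsors on $X$ to the category of patching data --- a $G$-torsor on each $\Spec\wh\OO_P$ and on each $U$, together with isomorphisms over the branch rings $\KK_\wp$ --- is an equivalence of categories.

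Granting this equivalence, the exact sequence is formal. Injectivity of $H^0(X,G)\to\prod_{P} H^0(\wh\OO_P,G)\times\prod_{U} H^0(\OO(U),G)$ holds because the $U$ cover the dense open $X\smallsetminus\PP$ and $G$ is separated over the reduced scheme $X$, so a global section trivial on each piece is trivial. Exactness at the middle $H^0$ term is the statement that a family of sections agreeing on every branch overlap $\Spec\KK_\wp$ descends to a global section, which is the full faithfulness of the patching functor applied to the trivial torsor. The connecting map $\delta^0$ sends a family $(h_\wp)\in\prod_{\wp} H^0(\KK_\wp,G)$ to the torsor on $X$ obtained by glueing the trivial torsors on the pieces via the $h_\wp$; exactness at $\prod_{\wp} H^0(\KK_\wp,G)$ and at $H^1(X,G)$ then express, respectively, exactly when this glued torsor is trivial, and the fact that any torsor that becomes trivial on every $\wh\OO_P$ and every $U$ arises in this way (essential surjectivity onto the locus where all the local torsors are trivial). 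Finally, exactness at the middle $H^1$ term --- that the equalizer of the two maps to $\prod_{\wp} H^1(\KK_\wp,G)$ consists precisely of those families of local torsors that become isomorphic over the branches, and hence glue --- is again exactly the essential surjectivity together with full faithfulness of the patching equivalence. The asserted functoriality follows from the naturality of the restriction functors, which is what will later allow passage to the direct limit in Theorem~\ref{6 term 1 dim}.

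The step I expect to be the main obstacle is the first one: verifying that the geometric data here genuinely meets the hypotheses of the patching theorem of \cite{HKL}, and in the required generality (namely $X$ excellent and reduced but possibly singular at the points of $\PP$, with $G$ smooth affine and neither reductive nor connected assumed). Concretely, one must confirm that the completed local rings $\wh\OO_P$, the affine opens $U$, and the branch localizations $\KK_\wp$ form the kind of factorization/cover that \cite{HKL} treats, correctly matching the overlaps $\Spec\KK_\wp$ to the branches of $X$ at $P$ and to the adjacent components, and that excellence suffices to guarantee that each $\wh\OO_P$ is reduced with minimal primes exactly the branches $\wp$. Once the patching equivalence is established in this setting, the derivation of the six-term sequence is a routine unwinding that parallels \cite[Theorem~3.5]{HHK:H1}.
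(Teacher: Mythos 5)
Your skeleton is exactly the paper's---assemble the triple consisting of the formal neighborhoods of $\PP$, the open complement $X \smallsetminus \PP$, and the branch overlaps, then invoke the Tannakian patching machinery of \cite{HKL}---but the step you defer as ``the main obstacle'' is where essentially all of the mathematical content lies, and your proposal does not supply it. The hypothesis that \cite[Corollary~3.0.2]{HKL} actually requires is not a geometric covering condition to be checked by inspection, but \emph{patching for coherent sheaves}: one must show that the base-change functor $\fM(X) \to \fM(\wh{\PP}) \times_{\fM(\PP^\circ)} \fM(X \smallsetminus \PP)$ is an equivalence of categories, where $\wh{\PP} = \Spec(\prod_{P \in \PP} \wh\OO_P)$ and $\PP^\circ = \wh{\PP} \times_X (X \smallsetminus \PP) = \Spec(\prod_{\wp \in \BB} \KK_\wp)$. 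That equivalence is a known but nontrivial theorem, not a formality: for affine $X$ it is due to Ferrand--Raynaud \cite[Proposition~4.2]{FR} (alternatively Artin \cite[Theorem~2.6]{artinalg}), and it extends to general, possibly non-affine, $X$ by \cite[Theorem~3.2]{Pr}. This is also precisely where excellence and reducedness enter (guaranteeing that $\wh\OO_P$ is reduced, so that $\PP^\circ$ really is the spectrum of a finite product of fields indexed by the branches, matching your description of the punctured formal neighborhood). Your instinct that this verification is the crux is correct, but without these inputs the proof is incomplete: since $\Spec \wh\OO_P \to X$ is not an open immersion, no ordinary Zariski glueing argument can substitute for them.

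Two smaller points. First, once coherent-sheaf patching is established, \cite[Corollary~3.0.2]{HKL} delivers the six-term exact sequence directly---the Tannakian passage from sheaves to $G$-torsors and the unwinding into a Mayer--Vietoris sequence are already packaged in that citation---so your second paragraph, while correct in outline (it parallels the derivation in \cite[Theorem~3.5]{HHK:H1}, identifying exactness at the middle terms with full faithfulness and essential surjectivity of the patching functor), re-proves what the paper simply quotes. Second, the formal unwinding you give is itself sound, including the treatment of the doubled arrow at the last term; so the only genuine gap is the unestablished coherent-sheaf patching equivalence, which the paper fills with the Ferrand--Raynaud/Artin and Pries citations.
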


Before proving this proposition, we first introduce some notation. For any scheme $X$, let $\fM(X)$ be the category of coherent sheaves of $\ms O_X$-modules.  For any ring $R$, let $\fM(R)=\fM(\Spec(R))$, which may be identified with the category of finitely presented $R$-modules.  Given functors $\phi_1:\ms C_1 \to \ms C_0$ and $\phi_2:\ms C_2 \to \ms C_0$, we write $\ms C_1 \times_{\ms C_0} \ms C_2$ for the 2-fiber product of $\ms C_1$ and $\ms C_2$ over $\ms C_0$.  An object in this category consists of a triple $(A_1,A_2,\psi)$ with $A_i$ an object in $\ms C_i$ and $\psi:\phi_1(A_1) \to \phi_2(A_2)$ an isomorphism; and a morphism in this category consists of a pair of compatible morphisms in $\ms C_1$ and $\ms C_2$.  Given morphisms of schemes $X_0 \to X_i \to X$ for $i=1,2$, we say that {\em patching holds for coherent sheaves} on $X$ with respect to $X_1,X_2,X_0$ if the base change functor 
$\fM(X) \to \fM(X_1) \times_{\fM(X_0)} \fM(X_2)$ is an equivalence of categories.  This says that to give a coherent sheaf on $X$ is the same as giving coherent sheaves on $X_1$ and on $X_2$ together with an isomorphism between the sheaves they induce on $X_0$.  For example, this holds if $X_1,X_2$ are Zariski open subsets of $X$ whose union is $X$ and whose intersection is $X_0$.

\begin{proof}[Proof of Proposition~\ref{6 term 1 dim patch} (and hence of Theorem~\ref{6 term 1 dim})]
Let $\wh{\PP} = \Spec(\prod_{P \in \PP} \wh {\ms O}_P)$ and let 
\[\ms P^\circ = \wh{\PP} \times_X (X \smallsetminus \PP) = \Spec(\prod_{\wp \in \BB} \ms K_\wp).\]  Here $X \smallsetminus \PP = \bigcup_{U \in \UU} U$.  Then patching holds for coherent sheaves on $X$ with respect to $\wh{\PP}, X \smallsetminus \PP, \ms P^\circ$.  In the case that $X$ is affine, this follows by \cite[Proposition~4.2]{FR} or \cite[Theorem~2.6]{artinalg}; and the case of a general $X$ is then a consequence, as shown in \cite[Theorem~3.2]{Pr}.  By \cite[Corollary~3.0.2]{HKL}, since patching holds for coherent sheaves, it follows that we have the exact sequence asserted in the proposition.
\end{proof}

\begin{cor} \label{double coset H1 curve}
The coboundary map of the exact sequence in Theorem~\ref{6 term 1 dim} induces a bijection of pointed sets
\[\Cl(X,G) \to \Sha(X,G),\]
where $\displaystyle \Cl(X,G) = \prod_{\eta \in X_{(1)}} G(\KK_\eta) \ \backslash {\prod_{\wp \in \BB_X}}\!\!' \ G(\KK_\wp) \ / \prod_{P \in X_{(0)}} G(\wh \OO_P)$ 
and where $\Sha(X,G)$ is the set of isomorphism classes of $G$-torsors over $X$ that are trivial over each complete local ring $\wh \OO_P$ {\sl and} over each generic point of $X$.
\end{cor}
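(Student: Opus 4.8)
The plan is to derive this corollary from the Mayer--Vietoris sequence of Theorem~\ref{6 term 1 dim} in exactly the way that Corollary~\ref{double coset H1} was derived from Theorem~\ref{6-term_sequence_points}; in fact the argument here is slightly simpler. Write
\[\alpha\colon H^1(X,G) \to \prod_{P \in X_{(0)}} H^1(\wh \OO_P,G) \times \prod_{\eta \in X_{(1)}} H^1(\KK_\eta,G)\]
for the middle vertical map in the second row of that sequence, and let $\delta$ denote the coboundary map $\prod'_{\wp \in \BB_X} G(\KK_\wp) \to H^1(X,G)$.

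First I would use the exactness of the six-term sequence of pointed sets to see that $\delta$ induces a bijection from the double coset space $\Cl(X,G)$ onto $\ker(\alpha)$. As in the patching bijection~(\ref{sgp bij}), this is the standard fact that in a nonabelian Mayer--Vietoris sequence the fibers of the coboundary map are precisely the orbits of the two-sided action on the restricted product $\prod'_{\wp} G(\KK_\wp)$: exactness at that term identifies the preimage of the trivial class with the image of $\prod_P G(\wh\OO_P)\times\prod_\eta G(\KK_\eta)$, and the usual twisting argument upgrades this to the statement that $\delta(g)=\delta(g')$ exactly when $g$ and $g'$ lie in the same $\bigl(\prod_\eta G(\KK_\eta),\prod_P G(\wh\OO_P)\bigr)$-double coset. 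Exactness at $H^1(X,G)$ then gives $\im(\delta)=\ker(\alpha)$, so $\delta$ descends to a bijection $\Cl(X,G)\to\ker(\alpha)$.

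It then remains to identify $\ker(\alpha)$ with $\Sha(X,G)$. A class in $H^1(X,G)$ lies in $\ker(\alpha)$ precisely when the corresponding $G$-torsor is trivial over each $\wh\OO_P$ and trivial in $H^1(\KK_\eta,G)$ for every $\eta\in X_{(1)}$. Here the key simplification relative to Corollary~\ref{double coset H1} is that no appeal to Artin approximation is needed: because $X$ is reduced, the local ring at a generic point $\eta$ is already the field $\KK_\eta$, so triviality over $\KK_\eta$ is literally triviality of the torsor at the generic point $\eta$. Thus $\ker(\alpha)$ is exactly the set of isomorphism classes of $G$-torsors on $X$ that are trivial over every $\wh\OO_P$ and over every generic point of $X$, which is the definition of $\Sha(X,G)$. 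Composing, $\delta$ induces the asserted bijection $\Cl(X,G)\to\Sha(X,G)$.

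The only genuinely nontrivial input is the double-coset interpretation of the coboundary map, i.e.\ that its fibers are the two-sided orbits; but this is the same mechanism already invoked for~(\ref{sgp bij}) and in the proof of Corollary~\ref{double coset H1}, and it is formal once Theorem~\ref{6 term 1 dim} is in hand. I therefore expect no real obstacle: given the theorem, the corollary is essentially immediate, with the identification $\ker(\alpha)=\Sha(X,G)$ being a matter of unwinding definitions.
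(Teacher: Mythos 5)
Your proof is correct and takes exactly the route the paper intends: the paper states this corollary without a separate proof, treating it as immediate from Theorem~\ref{6 term 1 dim} in the same way Corollary~\ref{double coset H1} followed from Theorem~\ref{6-term_sequence_points}, and your argument supplies precisely that reasoning (double-coset fibers of the coboundary map via the standard twisting mechanism behind~(\ref{sgp bij}), then identification of $\ker(\alpha)$ with $\Sha(X,G)$). You also correctly pinpoint the one simplification relative to Corollary~\ref{double coset H1}: since $X$ is reduced, $\KK_\eta$ is literally the local ring at the generic point $\eta$, so no appeal to Artin approximation is needed here.
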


\begin{ex} \label{1-dim ex}
In the situation that $X$ is a smooth connected curve over a finite field having function field $K$, or a dense open subset of $\Spec(\ms O_K)$ for some number field $K$, we recapture the classical situation.  Namely, 
if we let $S$ be the (finite) set of places of $K$ that do not correspond to (closed) points of $X$, then
$\prod'_{\wp \in \BB_X} G(\KK_\wp)$ is the same as the ad\`ele group $\A(G(K,S)) = \{(g_v)_v \in 
\prod_{v \not\in S} G(K_v)\ |\ g_v \in G(O_v) \ {\rm for\ almost\ all} \ v\}$, and 
$\prod_{P \in X_{(0)}} G(\wh \OO_P)$ equals $\A^\infty(G(K,S))$, the group of integral ad\`eles with respect to the 
places of $K$ not in $S$.
Also, there is a unique generic point $\eta$, with $\KK_\eta = K$.  Thus $\Cl(X,G) = G(K) \backslash G(\A(K,S)) / G(\A^\infty(K,S))$.  
In particular, we have the following cases:
\renewcommand{\theenumi}{\alph{enumi}}
\renewcommand{\labelenumi}{(\alph{enumi})}
\begin{enumerate}
\item  \label{global Gm}
If $G = \Gm$
then Hilbert's Theorem 90 implies that $H^1(K,G)$ is trivial, as is each $H^1(\wh \OO_P,G)$ (since $\wh \OO_P$ is local); and thus from the exact sequence in Theorem~\ref{6 term 1 dim} we recover the classical fact that $\Cl(X,G) = H^1(X,G) = \Pic(X)$, the divisor class group of~$X$.
\item  \label{global PGL}
If $G = \PGL_n$, then $H^1(X,G)$ classifies Azumaya algebras of degree $n$ over $X$, and $H^1(K,G)$ classifies
central simple algebras of degree $n$ over $K$.  Thus $H^1(X,G) \to H^1(K,G)$ is injective, since $\Br(X) \to \Br(K)$ is injective by \cite[Chapter IV, Corollary~2.6]{Milne}.
Consequently, $\Sha(X,G)$ is trivial, and hence so is $\Cl(X,G)$, by Corollary~\ref{double coset H1 curve}. 
\item \label{sssc}
In the equal characteristic case (where $X$ is a smooth curve over a finite field $k$), if $G$ is semisimple and simply connected over $k$, then $H^1(K,G)$ is trivial by \cite[Satz~A]{Harder75}; and so is $H^1(\wh\OO_{X,P},G)$, for $P$ a closed point of $X$ (e.g., see \cite[Lemma~4.4]{BeDh09}).  So {\it every} $G$-torsor over $X$ corresponds to an element of $\Sha(X,G)$ as defined above.  We thus recover the classical fact that $\Cl(X,G) = G(K) \backslash G(\A(K,S)) / G(\A^\infty(K,S))$ classifies $G$-torsors over $X$ in this situation.
\end{enumerate}
\end{ex}

We next turn to the case of a smooth affine group scheme $G$ over a model $\XX$ of a semi-global field, obtaining an associated Mayer-Vietoris sequence in \'etale cohomology and a description of the associated double coset space as a local-global obstruction, paralleling the above case of a one-dimensional scheme.  
As in that case, we rely on \cite{HKL}.

\begin{thm} \label{6-term_sequence_points rings}
Let $T$ be a complete discrete valuation ring, let
$\XX$ be an integral flat projective $T$-curve with closed fiber $X$, and let $G$ be a smooth affine group scheme over $\XX$. 
Then there is an exact sequence in \'etale cohomology
\[\xymatrix{
1 \ar[r] & H^0(\XX,G) \ar[r] &
\prod_{P \in X_{(0)}} H^0(\wh R_P,G) \times \prod_{\eta \in X_{(1)}} H^0(R_\eta^\h,G) \ar[r] & \prod'_{\wp \in \BB_X} H^0(\wh R_\wp,G) \ar@<-2pt> `d[l]
`[lld] [lld] \\  
& H^1(\XX, G) \ar[r] & \prod_{P \in X_{(0)}} H^1(\wh R_P,G) \times \prod_{\eta \in X_{(1)}} H^1(R_\eta^\h,G) \ar@<.5ex>[r] \ar@<-.5ex>[r] & \prod'_{\wp \in \BB_X} H^1(\wh R_\wp,G).
}\]  
\end{thm}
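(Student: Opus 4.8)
The plan is to mirror the derivation of Theorem~\ref{6 term 1 dim} from Proposition~\ref{6 term 1 dim patch}, and one level up the derivation of Theorem~\ref{6-term_sequence_points} from~(\ref{ses diagram patch}): first prove a finite patching version of the asserted sequence, indexed by a fixed finite set $\PP$ of closed points, and then pass to the direct limit over such $\PP$ using Lemma~\ref{expand ex seq}(\ref{enlarged seq sets}). For the finite version I would fix a nonempty finite set $\PP\subseteq X_{(0)}$ containing every closed point at which $X$ is not unibranched, let $\UU$ be the set of connected components of $X\smallsetminus\PP$, and let $\BB$ be the branches of $X$ at the points of $\PP$; for $\PP$ this large, each $U\in\UU$ lies in a single irreducible component of $X$ and consists of unibranched points. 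Setting $\wh\PP=\Spec\bigl(\prod_{P\in\PP}\wh R_P\bigr)$, taking the ``open'' piece to be $\coprod_{U\in\UU}\Spec(\wh R_U)$, and the overlap to be $\branch{\PP}=\Spec\bigl(\prod_{\wp\in\BB}\wh R_\wp\bigr)$, each with its natural morphism to $\XX$, the crux is to show that patching holds for coherent sheaves on $\XX$ with respect to these data, i.e.\ that the base-change functor realizes $\fM(\XX)$ as the $2$-fiber product of $\prod_P\fM(\wh R_P)$ and $\prod_U\fM(\wh R_U)$ over $\prod_\wp\fM(\wh R_\wp)$. Granting this, \cite[Corollary~3.0.2]{HKL} yields, for every smooth affine $G$ over $\XX$, the finite six-term exact sequence obtained from the asserted one by replacing $X_{(0)},X_{(1)},\BB_X,R_\eta^\h$ by $\PP,\UU,\BB,\wh R_U$, functorially in $\PP$.

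To verify the patching hypothesis I would combine two inputs. Since $\XX$ is projective over the complete discrete valuation ring $T$, formal GAGA (Grothendieck's existence theorem) gives an equivalence $\fM(\XX)\simeq\fM(\wh\XX)$, where $\wh\XX$ is the formal completion of $\XX$ along $X$. Next, the patching of finitely presented modules along the closed fiber from the semi-global theory (\cite{HH:FP}, \cite{HHK}) identifies $\fM(\wh\XX)$ with the $2$-fiber product of the module categories over the pieces $\wh R_P$ and $\wh R_U$, glued over the branch rings $\wh R_\wp$. (The condition on $\PP$ ensures that the branch data is well behaved, exactly as in the field-theoretic setup of Section~\ref{sect: background}.) Composing these two equivalences gives the patching statement required by \cite[Corollary~3.0.2]{HKL}.

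The passage to the limit is then formally identical to the proof of Theorem~\ref{6-term_sequence_points}. Writing $\PP'=X_{(0)}\smallsetminus\PP$, set $C^i=\prod_{P\in\PP'}H^i(\wh R_P,G)$ and let $\epsilon^i$ be the composite of the projection $A^i\to\prod_U H^i(\wh R_U,G)$ with the map induced by $\wh R_U\to\wh R_P$ for the unique $U\in\UU$ containing $P\in\PP'$. Lemma~\ref{expand ex seq}(\ref{enlarged seq sets}) expands the finite sequence to one whose two middle terms are $\prod_{P\in X_{(0)}}H^i(\wh R_P,G)\times\prod_U H^i(\wh R_U,G)$ and $\prod_{\wp\in\BB}H^i(\wh R_\wp,G)\times\prod_{P\in\PP'}H^i(\wh R_P,G)$. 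Letting $\PP$ range over the directed system of admissible finite sets and taking direct limits, I would identify $\lim_\to\prod_{U\in\UU}H^i(\wh R_U,G)=\prod_{\eta\in X_{(1)}}H^i(R_\eta^\h,G)$, using $R_\eta^\h=\bigcup_U\wh R_U$ and the fact that \'etale cohomology of the finitely presented $G$ commutes with this filtered colimit, and $\lim_\to\bigl(\prod_{\wp\in\BB}H^i(\wh R_\wp,G)\times\prod_{P\in\PP'}H^i(\wh R_P,G)\bigr)=\prod'_{\wp\in\BB_X}H^i(\wh R_\wp,G)$, which is precisely the colimit defining the restricted product with $S=\PP$. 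Since filtered direct limits preserve exactness of sequences of pointed sets, the asserted sequence follows.

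The main obstacle is the patching hypothesis of the second paragraph: fitting formal GAGA together with the semi-global module-patching so that the hypothesis of \cite[Corollary~3.0.2]{HKL} is met for the \emph{non-open} morphisms $\Spec(\wh R_U)\to\XX$ and the two-dimensional complete local rings $\wh R_P$. This is where the present situation genuinely differs from Proposition~\ref{6 term 1 dim patch}, in which the analogous pieces were honest open subschemes and no GAGA was needed. Once that equivalence of categories is established, both the appeal to \cite{HKL} and the limiting argument are routine, running parallel to the field case verbatim.
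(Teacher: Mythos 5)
Your proposal is correct and follows essentially the same route as the paper: the paper likewise deduces the theorem from a finite patching proposition via Lemma~\ref{expand ex seq}(\ref{enlarged seq sets}) and a direct limit over admissible finite sets $\PP$, with the finite six-term sequence obtained from \cite[Corollary~3.0.2]{HKL} once patching for coherent sheaves on $\XX$ holds with respect to $\wh{\PP}$, $\wh{\UU}$, $\wh{\BB}$. The patching hypothesis you single out as the main obstacle is exactly what the paper settles by citing \cite[Theorem~3.4]{Pr} (see also \cite[Theorem~3.1.4]{HKL}), which is established precisely as you sketch: Ferrand--Raynaud/Artin module patching (\cite[Proposition~4.2]{FR} or \cite[Theorem~2.6]{artinalg}) along the closed fiber, combined with Grothendieck's Existence Theorem to pass between $\XX$ and its formal completion.
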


As in the case of Theorem~\ref{6 term 1 dim}, this follows via Lemma~\ref{expand ex seq}(\ref{enlarged seq sets}) from the analogous assertion with respect to a finite set, viz.:

\begin{prop}
Under the assumptions of Theorem~\ref{6-term_sequence_points rings}, let $\PP$ be a non-empty finite set of closed points of $\XX$ that includes all the points at which the closed fiber $X$ is not unibranched, let $\UU$ be the set of connected components of the complement of $\PP$ in $X$, and let $\BB$ be the set of branches of $X$ at the points of $\PP$.  
Then there is a functorial exact sequence of pointed sets in \'etale cohomology
\[
\xymatrix{
1 \ar[r] & H^0(\XX,G) \ar[r] &
\prod_{P \in \PP} H^0(\wh R_P,G) \times \prod_{U \in \UU} H^0(\wh R_U,G) \ar[r] &
\prod_{\wp \in \BB} H^0(\wh R_\wp,G) \ar@<-2pt> `d[l]
`[lld] [lld] \\
& H^1(\XX, G) \ar[r] & \prod_{P \in \PP} H^1(\wh R_P,G) \times \prod_{U \in \UU}
H^1(\wh R_U,G) \ar@<.5ex>[r] \ar@<-.5ex>[r] & \prod_{\wp \in \BB}
H^1(\wh R_\wp,G). 
}\] 
\end{prop}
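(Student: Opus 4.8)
The plan is to follow the proof of Proposition~\ref{6 term 1 dim patch} step for step: reduce the assertion to the statement that patching holds for coherent sheaves on $\XX$ with respect to a suitable collection of affine patches, and then invoke \cite[Corollary~3.0.2]{HKL}. First I would assemble the patches. Set $\wh\PP = \Spec\bigl(\prod_{P \in \PP} \wh R_P\bigr)$ (the analog of the point patch in the one-dimensional case), set $\wh\UU = \Spec\bigl(\prod_{U \in \UU} \wh R_U\bigr)$ (the analog of the open complement, here a finite disjoint union of $t$-adic completions, since the $U \in \UU$ are the connected components of $X \smallsetminus \PP$ and each is affine), and set the overlap $\ms P^\circ = \Spec\bigl(\prod_{\wp \in \BB} \wh R_\wp\bigr)$, with the morphisms $\ms P^\circ \to \wh\PP$ and $\ms P^\circ \to \wh\UU$ over $\XX$ induced by the inclusions $\wh R_P \hookrightarrow \wh R_\wp$ and $\wh R_U \hookrightarrow \wh R_\wp$ for each branch $\wp$ at a point of $\PP$ lying on the closure of the relevant $U$. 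The hypothesis that $\PP$ contains every non-unibranched closed point guarantees that $\BB$ records all of the gluing data, exactly as in Section~\ref{sect: background}.

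Next I would verify that patching holds for coherent sheaves on $\XX$ with respect to $\wh\PP, \wh\UU, \ms P^\circ$; that is, that the base-change functor $\fM(\XX) \to \fM(\wh\PP) \times_{\fM(\ms P^\circ)} \fM(\wh\UU)$ is an equivalence of categories. This is precisely the patching theorem for (projective) models of semi-global fields established in \cite{HH:FP} and \cite{HHK}. Here the completeness of $T$ is essential: it permits passage between coherent sheaves on the projective $T$-curve $\XX$ and coherent sheaves on its $t$-adic formal completion along $X$ (Grothendieck's existence theorem), after which the formal completion is glued from the formal neighborhoods underlying $\wh\PP$ and $\wh\UU$ along $\ms P^\circ$ via the simultaneous-factorization results of semi-global patching. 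Granting this patching property, \cite[Corollary~3.0.2]{HKL} applied to the smooth affine group scheme $G$ yields the asserted functorial exact sequence of pointed sets in \'etale cohomology, with $H^i(\wh\PP,G) = \prod_{P} H^i(\wh R_P,G)$, $H^i(\wh\UU,G) = \prod_U H^i(\wh R_U,G)$, and $H^i(\ms P^\circ,G) = \prod_\wp H^i(\wh R_\wp,G)$, just as Proposition~\ref{6 term 1 dim patch} was deduced in the one-dimensional case.

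The step I expect to be the main obstacle is the verification of coherent-sheaf patching in this genuinely two-dimensional, $t$-adic situation. In Proposition~\ref{6 term 1 dim patch} the relevant cover was an honest gluing of a scheme along a closed subscheme and its open complement, so patching followed from classical results of Ferrand--Raynaud and Artin; here, by contrast, the patches $\Spec \wh R_P$ and $\Spec \wh R_U$ are formal neighborhoods in the $t$-adic direction rather than Zariski opens, so establishing that the fiber-product functor is an equivalence requires the full factorization machinery underlying patching over semi-global fields, together with the reduction from the projective scheme $\XX$ to its formal completion. One must also take care to identify $H^i(\wh R_U,G)$ with the \'etale cohomology of the formal neighborhood of $U$ and to check that the chosen $\PP$ (finite, containing all non-unibranched closed points) makes $\UU$ and $\BB$ behave as in the field-theoretic setup, so that the output of \cite[Corollary~3.0.2]{HKL} matches the sequence displayed in the proposition.
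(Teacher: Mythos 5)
Your proposal follows the paper's proof in outline, and the architecture is exactly right: the same three patches $\wh\PP = \Spec(\prod_{P\in\PP}\wh R_P)$, $\wh\UU = \Spec(\prod_{U\in\UU}\wh R_U)$, and the overlap $\Spec(\prod_{\wp\in\BB}\wh R_\wp)$; the same reduction to the statement that patching holds for coherent sheaves on $\XX$ with respect to these patches; and the same final appeal to \cite[Corollary~3.0.2]{HKL}. The one substantive divergence is how you justify the coherent-sheaf patching step, and there your account misfires. You attribute it to \cite{HH:FP} and \cite{HHK} and to the ``simultaneous-factorization results of semi-global patching,'' but those results concern finite-dimensional vector spaces (and, via factorization of matrices in $\GL_n(F_\wp)$, torsors) over the \emph{fields} $F_P, F_U, F_\wp$ --- not coherent modules over the \emph{rings} $\wh R_P, \wh R_U, \wh R_\wp$, which is what is needed here. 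Matrix factorization is the engine of the field-level theory precisely because a vector-space patching problem has free gluing data; it is not what drives the equivalence $\fM(\XX)\to\fM(\wh\PP)\times_{\fM(\ms P^\circ)}\fM(\wh\UU)$, whose essential surjectivity is a formal-gluing statement that holds with no factorization at all. The paper instead cites \cite[Theorem~3.4]{Pr} (see also \cite[Theorem~3.1.4]{HKL}), which obtains this equivalence from formal gluing in the style of Ferrand--Raynaud \cite[Proposition~4.2]{FR} or Artin \cite[Theorem~2.6]{artinalg}, combined with Grothendieck's Existence Theorem \cite[Th\'eor\`eme~5.1.6]{EGA III} to pass between the projective $T$-curve $\XX$ and its $t$-adic formal completion. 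So the ``main obstacle'' you anticipated is genuine (and you correctly identified GFGA as the essential ingredient), but it is discharged by these citable formal-gluing results rather than by the factorization machinery; with the citation corrected, your argument coincides with the paper's.
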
 

\begin{proof}
Proceeding as in the proof of Proposition~\ref{6 term 1 dim patch}, we let $\wh{\PP} = \Spec(\prod_{P \in \PP} \wh R_P)$, let $\wh{\UU} = \Spec(\prod_{U \in \UU} \wh R_U)$, and let $\wh{\BB} = \Spec(\prod_{\wp \in \BB} \wh R_\wp)$.  Then patching holds for coherent sheaves on $\XX$ with respect to $\wh{\PP}, \wh{\UU}, \wh{\BB}$ by \cite[Theorem~3.4]{Pr} (see also \cite[Theorem~3.1.4]{HKL}), as a consequence of \cite[Proposition~4.2]{FR} or \cite[Theorem~2.6]{artinalg} together with Grothendieck's Existence Theorem (\cite[Th\'eor\`eme~5.1.6]{EGA III}).  
By \cite[Corollary~3.0.2]{HKL}, it follows from this patching property that we have the exact sequence asserted in the proposition.
\end{proof}

\begin{cor} \label{double coset H1 rings}
The coboundary map of the exact sequence in Theorem~\ref{6-term_sequence_points rings} induces a bijection of pointed sets
\[\Cl_X(\XX,G) \to \Sha_X(\XX,G),\] 
where $\displaystyle \Cl_X(\XX,G) = \prod_{\eta \in X_{(1)}} G(R_\eta^\h) \ \backslash {\prod_{\wp \in \BB_X}}\!\!' \ G(\wh R_\wp) \ / \prod_{P \in X_{(0)}} G(\wh R_P)$ and where $\Sha_X(\XX,G)$ is the set of isomorphism classes of $G$-torsors over $\XX$ that are trivial over the complete local ring $\wh R_P = \wh \OO_{\XX,P}$ at $P$ for every point $P$ of the closed fiber $X$ of $\XX$ (including the generic points of $X$).
\end{cor}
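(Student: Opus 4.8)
The plan is to mirror the proof of Corollary~\ref{double coset H1}, using Theorem~\ref{6-term_sequence_points rings} in place of Theorem~\ref{6-term_sequence_points} and replacing the appeal to Artin Approximation by a Henselian lifting argument. Write $\delta$ for the coboundary map $\prod'_{\wp \in \BB_X} H^0(\wh R_\wp,G) \to H^1(\XX,G)$ and $\alpha$ for the map $H^1(\XX,G) \to \prod_{P \in X_{(0)}} H^1(\wh R_P,G) \times \prod_{\eta \in X_{(1)}} H^1(R_\eta^\h,G)$ in the sequence of Theorem~\ref{6-term_sequence_points rings}. By exactness at $H^1(\XX,G)$, the image of $\delta$ is the kernel of $\alpha$ (the preimage of the distinguished element); and by exactness at the restricted product, the fibers of $\delta$ are precisely the orbits of the two-sided action of $\prod_{\eta} G(R_\eta^\h)$ and $\prod_{P} G(\wh R_P)$ on $\prod'_{\wp} G(\wh R_\wp)$. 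Exactly as the bijection (\ref{sgp bij}) was extracted from (\ref{ses diagram patch}), this produces a bijection of pointed sets from $\Cl_X(\XX,G)$ onto $\ker(\alpha)$.

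It then remains to identify $\ker(\alpha)$ with $\Sha_X(\XX,G)$. The two sets impose the same triviality condition at each closed point $P \in X_{(0)}$, namely triviality over $\wh R_P$, so the content is to show that for each generic point $\eta \in X_{(1)}$ a $G$-torsor $E$ over $\XX$ is trivial over $R_\eta^\h$ if and only if it is trivial over $\wh R_\eta = \wh\OO_{\XX,\eta}$. Since the structure map $\Spec \wh R_\eta \to \XX$ factors through $\Spec R_\eta^\h$, triviality of $E$ over $R_\eta^\h$ forces triviality over $\wh R_\eta$ by base change. For the converse I would use that $R_\eta^\h$ is a Henselian discrete valuation ring whose inclusion into $\wh R_\eta$ is local and induces an isomorphism on residue fields, both equal to the residue field $\kappa(\eta)$ at $\eta$ (by the description of $R_\eta^\h$ recalled above and \cite[Lemma~3.2.1]{HHK:Hi}). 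The restriction $E_{R_\eta^\h}$ is a smooth affine scheme over $R_\eta^\h$, because $G$ is smooth; a trivialization over $\wh R_\eta$ is an $\wh R_\eta$-point of $E_{R_\eta^\h}$, which reduces to a $\kappa(\eta)$-point of its special fiber, and this point lifts to an $R_\eta^\h$-point since $E_{R_\eta^\h}$ is smooth over the Henselian local ring $R_\eta^\h$. Hence $E$ is trivial over $R_\eta^\h$.

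I expect this Henselian lifting to be the crux of the argument: it is where the standing smoothness hypothesis on $G$ enters, and it is the ring-theoretic analog of the Artin Approximation step (\cite[Proposition~5.8]{HHK:H1}, \cite{artin}) used in the field setting of Corollary~\ref{double coset H1}. With the equivalence of triviality over $R_\eta^\h$ and over $\wh R_\eta$ in hand, $\ker(\alpha)$ coincides with $\Sha_X(\XX,G)$, and the bijection of the first paragraph gives the asserted bijection $\Cl_X(\XX,G) \to \Sha_X(\XX,G)$.
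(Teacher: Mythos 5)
Your proof is correct, and it reaches the paper's conclusion by a genuinely different route at the key step. The first paragraph follows the paper exactly: both extract the bijection $\Cl_X(\XX,G) \to \ker(\alpha)$ from the exact sequence of Theorem~\ref{6-term_sequence_points rings} just as (\ref{sgp bij}) was extracted from (\ref{ses diagram patch}) (with the usual caveat, shared by the paper, that the double-coset description of the fibers of $\delta$ uses the standard twisting refinement of pointed-set exactness supplied by \cite[Corollary~3.0.2]{HKL}). Where you diverge is in identifying $\ker(\alpha)$ with $\Sha_X(\XX,G)$: the paper reduces to showing that an $\wh R_\eta$-point of the torsor yields a $\til R_\eta$-point, where $\til R_\eta \subseteq R_\eta^\h$ is the henselization of $R_\eta$, and invokes the Artin Approximation Theorem (\cite[Theorems~1.10, 1.12]{artin}), which requires checking that $T$ is excellent and that $R_\eta$ is a localization of a finite-type $T$-algebra. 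You instead observe that the torsor is a smooth affine scheme (by the standing smoothness hypothesis on $G$) and lift a $\kappa(\eta)$-point of the special fiber through the Henselian local ring via Hensel's lemma, bypassing Artin approximation and excellence entirely. This is a real simplification, made possible precisely by smoothness; Artin's theorem is the tool one would need for non-smooth finite-type situations, but here it is stronger than necessary. One caution: the structural facts you cite for $R_\eta^\h$ (Henselian discrete valuation ring with residue field $\kappa(\eta)$, via \cite[Lemma~3.2.1]{HHK:Hi}) are recalled in Section~\ref{sect: sgf Cl} under the hypothesis that $\XX$ is a \emph{normal} model, whereas Theorem~\ref{6-term_sequence_points rings} assumes only that $\XX$ is integral, flat, and projective over $T$; to match the paper's generality you should run your Hensel lifting over $\til R_\eta$ instead, which is Henselian local with residue field $\kappa(\eta)$ by general theory and maps to $\wh R_\eta$ with an isomorphism on residue fields, and then conclude via the containment $\til R_\eta \subseteq R_\eta^\h$ --- the same containment the paper's own proof uses. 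With that adjustment your argument is complete and arguably more elementary than the paper's.
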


\begin{proof}
By the exact sequence in Theorem~\ref{6-term_sequence_points rings}, it suffices to show that $\Sha_X(\XX,G)$ is the kernel of 
\[H^1(\XX, G) \to \prod_{P \in X_{(0)}} H^1(\wh R_P,G) \times \prod_{\eta \in X_{(1)}} H^1(R_\eta^\h,G).\]
For this, it suffices to show that a $G$-torsor over $\XX$ that becomes trivial over $\wh R_\eta$ must also become trivial over $R_\eta^\h$; i.e., if it has an $\wh R_\eta$-point then it has an $R_\eta^\h$-point.  Since $R_\eta^\h$ contains the henselization $\til R_\eta$ of $R_\eta$, it suffices to show that a $G$-torsor over $\XX$ with an $\wh R_\eta$-point has an $\til R_\eta$-point.
This in turn follows from the Artin Approximation Theorem (see \cite[Theorem~1.10 and Theorem~1.12]{artin}), which applies because the complete discrete valuation ring $T$ is excellent (by \cite[Scholie 7.8.3(iii)]{EGA IV}) and because $R_\eta$ is the localization of a $T$-algebra of finite type at a prime ideal. 
\end{proof}

\begin{ex}
The analogs of Example~\ref{1-dim ex}(\ref{global Gm}),(\ref{global PGL}) hold in the situation of Theorem~\ref{6-term_sequence_points rings} and Corollary~\ref{double coset H1 rings}:
\renewcommand{\theenumi}{\alph{enumi}}
\renewcommand{\labelenumi}{(\alph{enumi})}
\begin{enumerate}
\item  \label{ring proper Gm}
If $G = \Gm$
then Hilbert's Theorem 90 implies that $H^1(\wh R_P,G)$ and $H^1(R_\eta^\h,G)$ are trivial, since those rings are local, and so the exact sequence in Theorem~\ref{6-term_sequence_points rings} yields that $\Cl_X(\XX,G) = H^1(\XX,G) = \Pic(\XX)$ (thereby providing a justification for the $\Cl$ notation).
\item  \label{ring proper PGL}
If $G = \PGL_n$, then we have the commutative diagram
\[\xymatrix{
H^1(\XX, G) \ar[r] \ar[d] & \prod_{P \in X_{(0)}} H^1(\wh R_P,G) \times \prod_{\eta \in X_{(1)}}
H^1(R_\eta^\h,G) \ar[d]\\
H^1(F, G) \ar[r] & \prod_{P \in X_{(0)}} H^1(F_P,G) \times \prod_{\eta \in X_{(1)}} H^1(F_\eta^\h,G). 
}\] 
Here the bottom horizontal arrow has trivial kernel by Example~\ref{Sigma ex}(\ref{rational conn gps}), since that kernel is the same as $\Cl_X(F,\PGL_n)$ by Theorem~\ref{6-term_sequence_points}.  Also, the left vertical arrow has trivial kernel by \cite[Chapter IV, Corollary~2.6]{Milne}, as in Example~\ref{1-dim ex}(\ref{global PGL}).  So by commutativity of the diagram, the top horizontal arrow has trivial kernel; i.e., $\Sha_X(\XX,\PGL_n)$ is trivial
(yielding a local-global principle for the Brauer group in this context).  By Corollary~\ref{double coset H1 rings}, $\Cl_X(\XX,\PGL_n)$ is also trivial.
\end{enumerate}
\end{ex}


\medskip

\noindent{\bf Author Information:}\\

\noindent David Harbater\\
Department of Mathematics, University of Pennsylvania, Philadelphia, PA 19104-6395, USA\\
email: harbater@math.upenn.edu

\end{document}